\pgfplotsset{compat=1.7}
\definecolor{refkey}{gray}{.65}
\definecolor{labelkey}{gray}{0}
\def\thm@space@setup{%
  \thm@preskip=\parskip \thm@postskip=0pt
}
\DeclareFontFamily{U}{mathx}{\hyphenchar\font45}
\DeclareFontShape{U}{mathx}{m}{n}{
	<5> <6> <7> <8> <9> <10>
	<10.95> <12> <14.4> <17.28> <20.74> <24.88>
	mathx10
	}{}
\DeclareSymbolFont{mathx}{U}{mathx}{m}{n}
\DeclareMathSymbol{\bigtimes}{1}{mathx}{"91}
\DeclareMathOperator*{\Kprod}{\tikz[anchor=base, baseline]\node[circle,inner sep=1pt,draw] {\small K};}
\DeclareMathOperator*{\Kprods}{\tikz[anchor=base, baseline]\node[circle,inner sep=.5pt,draw] {\tiny K};}
\journal{CMAME}
\definecolor{darkgreen}{rgb}{0,0.6,0}
\DeclareMathOperator{\SPAN}{span}
\newcommand{\lspan}{\textnormal{span}}
\newcommand{\supp}{\textnormal{supp}\,}
\newcommand{\csupp}{\textnormal{csupp}\,}
\newcommand{\Quad}{\mathbb X}
\newcommand{\Cost}{\mathfrak C}
\newcommand{\Ah}{\mathtt A}
\newcommand{\ah}{\mathtt a}
\newcommand{\Pprod}[2]{#1^{\scalebox{.35}[.7]{\,$\le$\,}#2}}
\newcommand{\Pfact}[2]{#1^{\scalebox{.35}[1]{$=$}#2}} 
\newcommand{\Mprod}[1]{\Pprod{M}{#1}}
\newcommand{\Nprod}[1]{\Pprod{N}{#1}}
\newcommand{\Qprod}[1]{\Pprod{\Quad}{#1}}
\newcommand{\phif}[1]{\Pfact\phi{#1}}
\newcommand{\psif}[1]{\Pfact\psi{#1}}
\newcommand\Block[1]{\Pfact B{#1}}
\newcommand\Blocka[1]{\Pfact{\textbf w}{#1}}
\DeclareMathOperator{\nnz}{nnz}
\DeclareMathOperator{\NNZ}{NZ}
\newcommand{\PBox}{\mathcal D}
\newcommand{\PPart}{\mathcal P}
\newtheorem{theorem}{Theorem}
\newtheorem{lemma}[theorem]{Lemma}
\newtheorem{corollary}[theorem]{Corollary}
\newtheorem{proposition}[theorem]{Proposition}
\newtheorem{note}[theorem]{Note}
\theoremstyle{definition}\newtheorem{example}{Example}
\theoremstyle{remark}\newtheorem{remark}{Remark}
\begin{document}

\begin{frontmatter}

\title{Sum factorization techniques in Isogeometric Analysis}

\author[UNIPV]{Andrea Bressan}
\ead{andrea.bressan@unipv.it}

\author[RICAM]{Stefan Takacs}
\ead{stefan.takacs@ricam.oeaw.ac.at}

\address[UNIPV]{Dipartimento di Matematica, Università di Pavia, Pavia, Italia}
\address[RICAM]{Johann Radon Institute for Computational and Applied Mathematics (RICAM),\\
Austrian Academy of Sciences, Linz, Austria}

\begin{abstract}
	The fast assembling of stiffness and mass matrices is a key issue in isogeometric
	analysis, particularly if the spline degree is increased. 
	We present two algorithms based on the idea of sum factorization, one for matrix
	assembling and one for matrix-free methods, and study the behavior of
	their computational complexity in terms of the spline
	order $p$.
	Opposed to the standard approach, these algorithms do not apply the idea element-wise,
	but globally or on macro-elements. If this approach is applied to Gauss quadrature,
	the computational complexity grows as 
	$p^{d+2}$ instead of $p^{2d+1}$ as previously achieved.
\end{abstract}

\begin{keyword}
Isogeometric analysis \sep Assembling matrices \sep Sum factorization
\end{keyword}

\end{frontmatter}

\section{Introduction}

Isogeometric Analysis,~\cite{Hughes:2005}, was proposed more than a decade ago
as a new approach for the discretization of partial
differential equations (PDEs) and has gained much interest since then.
Spline spaces, such as spaces spanned by tensor-product B-splines or NURBS,
are typically used for geometry representation in standard CAD systems. In
Isogeometric Analysis, one uses such spaces for the geometry representation
of the computational domain and as space of ansatz functions for the solution of the PDE.

The fast assembling of stiffness and mass matrices is a key issue in Isogeometric
Analysis, particularly if the spline degree is increased. If the assembling is done
in a naive way, and $p^d$ quadrature points are used per element,
the computational complexity of assembling the
mass or the stiffness matrix has order $N p^{3d}$, where $N$
is the number of unknowns, $p$ is the spline order (degree~+~1), and $d$ is the domain dimension.
In recent years, much effort was set on proposing faster assembling schemes; we
want to name particularly the methods of \emph{sum factorization},
\emph{low rank assembling}, and \emph{weighted quadrature}.
Several papers \cite{quad1,quad2,quad3,quad4,quad5} proposed  quadrature
schemes that improve over Gauss quadrature by reducing the number of quadrature nodes by a
$p$-independent factor. Since these methods preserve the tensor-product structure, they
can be combined with the presented techniques in a straight-forward manner.

\emph{Sum factorization} was originally proposed for spectral methods and later applied to high-order finite element
methods \cite{orszag,Melenk:2001,Ainsworth:2011}.
Antolín, Buffa, Calabrò, Martinelli, and Sangalli~\cite{Antolin:2015} have carried
over this approach to the case of Isogeometric Analysis and have shown that the
computational complexity of assembling a standard mass of stiffness matrix can be
reduced to order $ N p^{2d+1}$.

Authors from the same group have then further reduced the computational complexity
by \emph{weighted quadrature}, cf. the publication by Calabrò, Sangalli, and
Tani~\cite{Calabro:2017} and the related publication~\cite{bartovn2017efficient}. Here, the idea is to reduce the number of quadrature
points by setting up appropriately adjusted quadrature rules. This allows
to assemble a standard mass or stiffness matrix with a computational complexity
of order $N p^{d+1}$. This, however, comes with the cost that the resulting
matrix is non-symmetric and a careful analysis is necessary to show that the
overall discretization satisfies the expected error bounds.

\emph{Low rank assembling} is based on a completely different idea.
It is observed that for practical problems the mass
and the stiffness matrices have a small tensor rank and are well approximated by a sum of few simple terms.
Mantzaflaris, Jüttler, Khoromskij, and Langer~\cite{Mantzaflaris:2017}
have discussed how to set up an assembling algorithm based on this
approach. Hofreither~\cite{Hofreither:2017} and Georgieva and
Hofreither~\cite{GeorgievaHofreither:2016} have
shown that by rewriting
the problem accordingly, standard adaptive cross approximation algorithms
can be used as black-box methods to determine the mass or stiffness matrices.
For these approaches, the overall computational complexity for assembling the system matrix is of order $NR p^d$, 
where $R$ is the (unknown) tensor-rank of the resulting matrix. 
The complexity a matrix-free application is of order $N R p$.

In this article, we present a \emph{global} variant of sum factorization.
We give two algorithms, one for assembling and one for matrix-free methods.
We derive a complexity analysis that allows to estimate the computational
complexity for many situations.
We observe that the computational complexity of sum-factorization is reduced
if the idea is not applied on a per-element basis but globally.
The cost of assembling the system matrix using Gauss quadrature by using global
sum-factorization is of order $N p^{d+2}$, unlike
per-element sum-factorization that has order $N p^{2d+1}$. 
Still, this algorithm yields the
same matrix, up to machine precision, which would be obtained with straight-forward assembling.
We will then see that the computational complexity is still preserved
for a localized approach, which allows the extension to Hierarchical B-splines (HB splines).
We observe that, like for the low rank assembling, the
formation of the matrix is the most expensive part and that matrix-free approaches
yield smaller computational costs than approaches based on assembling. Besides
Gauss quadrature, our abstract analysis covers any other tensor product quadrature rules
and allows to recover the results for weighted quadrature.

This article is organized as follows. In Section~\ref{sec:2}, we state the
abstract formulation of the problem and give examples of bilinear forms
falling into the class. We introduce the global assembling procedure in
Section~\ref{sec:3}. In Section~\ref{sec:4}, we discuss the localization of
sum factorization and its applications. Then, in Section~\ref{sec:5}, we give an
algebraic description of the proposed algorithm. In Section~\ref{sec:6}, we explain why the evaluation of the basis functions and the geometry function is non trivial. In Section~\ref{sec:7} we give numerical experiments. Finally, the conclusions are given in Section~\ref{sec:8}.

\section{Bilinear forms and their discretization}\label{sec:2}

The goal of this paper is to discuss fast algorithms for tensor-product discretizations of bilinear forms. We consider bilinear forms
$a:H^{r}([0,1]^{d})\times H^{s}([0,1]^{d})\to\mathbb R$ of the form
\begin{equation}\label{eq:high-order}
a(u,v)
:= \sum_{\theta\in \Theta_r}\sum_{\eta\in \Theta_s} 
\int_{[0,1]^d}\mathcal F_{\theta,\eta}(\textbf{x})\;
 			\partial^{\theta} u(\textbf{x})\;
 			\partial^{\eta}v (\textbf{x})\;
 			\mathrm{d} \textbf{x} ,
\end{equation}
where $\Theta_r\subset  \mathbb N^d$ is the set of multi-indices $\theta=(\theta_1,\ldots,\theta_d)$ corresponding to partial derivatives of order up to $r$. Moreover, we consider block-systems where each
block has such a structure.

A Petrov-Galerkin discretization is performed by choosing sub-spaces
$$U_h \subset H^{r}([0,1]^{d}) \quad \mbox{and} \quad  V_h \subset H^{s}([0,1]^{d})$$
and by restricting the bilinear form $a(\cdot,\cdot)$ to these spaces. The special case $U_h=V_h$,
which is known as Galerkin discretization, is certainly also covered by this formulation.
A matrix representation $A$ is obtained by choosing a basis $\Phi=(\phi_n)_{n=1}^N$ for $U_h$ and a basis $\Psi=(\psi_m)_{m=1}^M$ for $V_h$ and setting
\begin{equation}\label{eq:ass:0}
	A = [
		a(\phi_n,\psi_m)
	]_{n=1,\ldots,N}^{m=1,\ldots,M}.
\end{equation}

An exact evaluation of the bilinear form $a(\cdot,\cdot)$ is typically not feasible.
In practice, the bilinear form is approximated using a quadrature rule.
Let $\Quad\subset \mathbb R^d$ be the set of quadrature points and $\omega:\Quad\to\mathbb R$ be a weight function. 
Then, $a(u,v)$ is approximated by
\begin{equation}\label{eq:quadrature:1}
 \ah(u,v) := \sum_{\theta\in \Theta_r}\sum_{\eta\in \Theta_s} \sum_{\textbf{x} \in \Quad} 
								\omega(\textbf{x})\, \mathcal{F}_{\theta,\eta} (\textbf{x}) \,\partial^\theta u(\textbf{x})\, \partial^\eta v(\textbf{x}),
\end{equation}
and the matrix $A$ is approximated by $\Ah=[
	\ah(\phi_n,\psi_m)
]_{n=1,\ldots,N}^{m=1,\ldots,M}$.

While the use of tensor-product discretizations seems restrictive, the following two examples illustrate that the standard isogeometric method employing B-splines or NURBS fits into the considered class of problems.
\begin{example}[Convection diffusion equation]\label{example:second-order}
	We consider a standard single-patch isogeometric discretization, so we assume that the computational
	domain $\Omega$ is parametrized by a diffeomorphism
	\[
			\textbf{G}:\widehat{\Omega}:=[0,1]^d \rightarrow \Omega := \textbf{G}(\widehat{\Omega}) \subset \mathbb{R}^d.
	\]
	We assume that a source function $f\in L^2(\Omega)$ and coefficient-functions
	$A\in L^\infty(\Omega, \mathbb{R}^{d\times d})$, $\textbf{b}\in L^{\infty}(\Omega, \mathbb{R}^d)$ and $c\in L^{\infty}(\Omega)$
	are given. The boundary value problem reads as follows. Find $u\in H^1(\Omega)$ such that
	\begin{equation}\nonumber
			- \nabla \cdot (A \nabla u) + \textbf{b} \cdot \nabla u + c u= f \quad\mbox{ in }\Omega, \qquad
			\frac{\partial u}{\partial n} = 0 \quad\mbox{ on }\partial \Omega.
	\end{equation}
	The variational formulation reads as follows. Find $u\in H^1(\Omega)$ such that
	\begin{equation*}
				a(u,v):=
					\Big(
							\left[
									\begin{array}{cc}
											c & 0\\
											\textbf{b} & A
									\end{array}
							\right]  \left[
									\begin{array}{c}
											u\\
											\nabla  u
									\end{array}
							\right],\left[
									\begin{array}{c}
											v\\
											\nabla v
									\end{array}
							\right]
					\Big)_{L^2(\Omega)}
					= (f, v)_{L^2(\Omega)} \  \mbox{for all }v  \in H^1(\Omega).
	\end{equation*}
	A standard isogeometric discretization is set up on the parameter domain, i.e., we first define the spline
	space
	\begin{align*}
	\widehat{V}_h & :=\lspan\{\widehat \varphi_1,\ldots,\widehat \varphi_{N}\}
							:= \bigotimes_{\delta=1}^d \lspan \{\Pfact{\widehat \varphi}{\delta}_{1},\ldots,\Pfact{\widehat \varphi}{\delta}_{{N_{\delta}}}\}
						\subset H^1(\widehat{\Omega}),
	\end{align*}
	where $\Pfact{\widehat \varphi}{\delta}_{{n}}$ are the standard B-spline basis
	functions as given by the Cox-de~Boor formula.
	Then, the ansatz functions are transferred to the physical domain using the pull-back principle
	\[
			V_h := \widehat{V}_h \circ \textbf{G}^{-1}, \quad \varphi_{n}:= \widehat{\varphi}_{n} \circ \textbf{G}^{-1}.
	\]

	For the computation of the stiffness matrix $\Ah$, we transfer the functions of interest to the parameter domain and obtain
	\begin{align}
			&A  = [a(\varphi_{n}, \varphi_{m})]_{n=1\ldots,N}^{m=1,\ldots,N} \nonumber\\
			&= \Big[
	\Big(
		\underbrace{| J_{\textbf{G}}|
		\left[
		\begin{array}{cc}
		I &\\
		& J_\textbf{G}^{-\top}
		\end{array}
	\right]
		\left[
		\begin{array}{cc}
		\widehat{c} & 0\\
		\widehat{\textbf{b}} & \widehat A
		\end{array}
	\right]
	\left[
		\begin{array}{cc}
		I &\\
		& J_\textbf{G}^{-1}
		\end{array}
	\right] }_{\displaystyle \mathcal{F}:=}
	\left[
		\begin{array}{c}
		1\\
		\nabla
		\end{array}
	\right] \widehat \varphi_n,
	\left[
		\begin{array}{c}
		1\\
		\nabla
		\end{array}
	\right] \widehat \varphi_m
	\Big)_{L^2(\widehat \Omega)} \Big]_n^m\nonumber\\
				& = \sum_{\theta,\eta\in\Theta_1} \Big[
				 \int_{\widehat \Omega}
						\mathcal{F}_{\theta,\eta} (\textbf{x})\;
						\partial^{\theta} \widehat{\varphi}_n (\textbf{x})\;
						\partial^{\eta} \widehat{\varphi}_m (\textbf{x})\;
						\mathrm{d} \textbf{x}
				 \Big]_{n=1,\ldots,N}^{m=1,\ldots,N}
	\label{eq:cdr:1}
	\end{align}
		where $ J_{\textbf{G}}$ is the Jacobi matrix of the
		geometry function and $| J_{\textbf{G}}|$ the absolute value of its determinant.
		We see that the variational formulation of the convection diffusion equations
		belongs to the class of considered bilinear forms. If
		we replace the integral by a quadrature formula, we obtain an 
		approximation $\ah(u,b)$ as in~\eqref{eq:quadrature:1}.
\end{example}

As mentioned above, we can also consider bilinear forms that represent
systems of differential equations.

\begin{example}[Stokes equation]\label{example:stokes}
As a second example, consider we the Stokes system. For ease of notation, we
assume $\Omega=[0,1]^2$. In variational formulation, the Stokes equations
read as follows. Find $u \in [H^1(\Omega)]^2$ and $p \in L^2(\Omega)$ such that
\begin{equation}\nonumber
\left \{\begin{aligned}
(\nabla u,\nabla v)_{L^2(\Omega)} + ( p, \nabla \cdot v)_{L^2(\Omega)}  &= (f,v)_{L^2(\Omega)}&\quad
\mbox{for all}
\quad
v \in [H^1(\Omega)]^2
\\
(\nabla\cdot u,q)_{L^2(\Omega)}\phantom{  + (\nabla u,\nabla w)_{L^2(\Omega)}  }&= 0&\quad
\mbox{for all}
\quad
q \in [L^2(\Omega)]^2.\,
\end{aligned}
\right .
\end{equation}
In a Galerkin discretization, the variational problem is restricted to sub-spaces
$$
	V_h^{(1)}  \subset H^1(\Omega),\qquad
	V_h^{(2)}  \subset H^1(\Omega)\qquad \mbox{and} \qquad
	Q_h  \subset L^2(\Omega)
$$
that satisfy the discrete inf-sup stability. In~\cite{Bressan:2013},
inf-sup stability has been shown for the following two methods.

For the \emph{isogeometric Taylor-Hood method}, the pressure space $Q_h$ is the space of
B-splines of some order $p$ with maximum smoothness. The velocity spaces $V_h^{(1)}$
and $V_h^{(2)}$ are the spaces B-splines of order $p+1$ and
reduced smoothness, $p-2$, on the same grid.

For the \emph{isogeometric sub-grid method}, the pressure space $Q_h$ is again the space of
B-splines of some order $p$ with maximum smoothness. The velocity spaces $V_h^{(1)}$
and $V_h^{(2)}$ are the spaces of B-splines of order $p+1$ with
maximum smoothness on a grid obtained from the pressure grid by one dyadic refinement step.

In both cases, a Galerkin discretization of such a system yields a block-matrix
$$
		A  = \left[
		\begin{array}{ccc}
		A_{11} &        &  B_1^\top\\
		       & A_{22} &  B_2^\top\\
		B_1    & B_2 & 0
		\end{array}
	\right] ,
$$
where each of the blocks $A_{\delta\delta}$ is obtained by a Galerkin discretization of the
scalar valued bilinear form $a_{\delta\delta}(\cdot,\cdot)$, given by
$$
		a_{\delta\delta}(u,v) :=
				\int_{\Omega} 
				\partial^{(1,0)} u(\textbf{x})\; \partial^{(1,0)} v(\textbf{x})\; \mathrm{d}\textbf{x}
 			+
				\int_{\Omega} 
				\partial^{(0,1)} u(\textbf{x})\; \partial^{(0,1)} v (\textbf{x})\;\mathrm{d}\textbf{x}
$$
and each of the blocks $B_{\delta}$ is obtained by a Petrov-Galerkin discretization of the
scalar valued bilinear form $b_{\delta}(\cdot,\cdot)$, given by
$$
		b_{1}(u,q) :=
				 \int_{\Omega} 
				\partial^{(1,0)} u(\textbf{x})\;  q(\textbf{x})\; \mathrm{d}\textbf{x} 
		\quad\mbox{ and }\quad
		b_{2}(u,q) :=
				 \int_{\Omega} 
				\partial^{(0,1)} u(\textbf{x})\;  q (\textbf{x})\; \mathrm{d}\textbf{x}
				.
$$
Here, the idea of sum factorization can be employed for each of the blocks of
the overall system. Note that -- although we have considered a standard Galerkin
discretization to the whole Stokes system -- the discretization of the blocks
$B_1$ and $B_2$ has the form of a Petrov-Galerkin type discretization.
\end{example}

\section{Sum factorization}\label{sec:3}

\subsection{Prerequisites and formulation of the algorithm}

In the last section, we have posed bilinear forms of the form~\eqref{eq:high-order},
which yield problems of the form~\eqref{eq:quadrature:1} if the integral is approximated
by quadrature formulas. First of all we rewrite \eqref{eq:quadrature:1} as a sum of simpler bilinear forms
\begin{equation}\label{eq:quadrature:2}
		\ah(u,v) = \sum_{\theta\in \Theta_r} \sum_{\eta \in \Theta_s}
			\ah_{\theta,\eta}(\partial^{\theta} u, \partial^{\eta} v),
\end{equation}
where
$$
	\ah_{\theta,\eta}(\phi,\psi)
	= \sum_{\textbf{x} \in \Quad} 
				\omega(\textbf{x})\;
				\mathcal{F}_{\theta,\eta} (\textbf{x}) \;
				\phi(\textbf{x})\;
				\psi(\textbf{x}),
		\quad
		\phi \in \partial^{\theta} U
		\quad\mbox{and}\quad
		\psi \in \partial^{\eta} V.
$$
Observe that we incorporate the derivatives into the spaces, not into
the bilinear forms. This is rather unusual from the point of view of
numerical analysis of partial differential equations, but is reasonable to
consider this formulation when discussing the assembling procedure of
stiffness matrices.

The idea is to assemble a matrix $\Ah_{\theta,\eta}$ for each of the bilinear forms $\ah_{\theta,\eta}(\cdot,\cdot)$. Then the matrix $\Ah$ is computed by 
\begin{equation}\label{eq:A-sum}
	\Ah=\sum_{\theta\in\Theta_r}\sum_{\eta\in\Theta_s} \Ah_{\theta,\eta}.
\end{equation}
All the matrices $\Ah_{\theta,\eta}$ are assembled by the same algorithm, but changing the coefficients $\mathcal F_{\theta,\eta}$ and the generating systems $\partial^\theta\Phi:=(\partial^\theta \phi_n)_{n=1}^N$ of $\partial^\theta U_h$ and $\partial^\theta\Psi:=(\partial^\theta \psi_m)_{m=1}^M$ of $\partial^\eta V_h$ that depend on $\theta$ and $\eta$.
Since $\partial^\theta\Phi$ and $\partial^\eta\Psi$
can contain linearly dependent functions for $\theta\ne 0$,
we call them generating sets rather than bases.

Since the algorithm is independent of $\theta$ and $\eta$, we drop the sub-indices $\theta$ and $\eta$ in its description and in its complexity analysis. This means that in the remainder we are considering the assembling of a matrix $\Ah$ corresponding to
\begin{equation}\label{eq:ass:1}
	\ah(\phi_n,\psi_m)
	:= \sum_{\textbf{x} \in \Quad} 
				\omega(\textbf{x})\;
				\mathcal{F} (\textbf{x}) \;
				\phi_n(\textbf{x})\;
				\psi_m(\textbf{x}),
\end{equation}
where the functions $\phi_n$ and $\psi_m$ are taken from the sets $\Phi$ and $\Psi$ that are not necessarily bases.

As already mentioned in the last section, the sum factorization algorithm requires that the discretization and the quadrature have a tensor-product structure.
This means that 
\begin{itemize}
\item \emph{Tensor-product discretization:} for each direction $\delta=1,\ldots,d$, there exist
sets of univariate functions $\Phi_\delta$ and $\Psi_\delta$ such that
\begin{equation}\label{eq:prelim-tp}
\begin{aligned}
\Phi&=\Phi_1\otimes\ldots\otimes\Phi_d, \qquad \Phi_\delta=(\phif\delta_{n})_{n=1}^{N_\delta},\\
\Psi&=\Psi_1\otimes\ldots\otimes\Psi_d,  \qquad \Psi_\delta=(\psif\delta_{m})_{m=1}^{M_\delta}.
\end{aligned}
\end{equation}
\end{itemize}

The meaning of \eqref{eq:prelim-tp} is that the $d$-variate functions in $\Phi$
are the products of the univariate functions in $\Phi_1,\ldots,\Phi_d$ and analogously the functions in $\Psi$.
By convention, we assume the lexicographic ordering.
For a rigorous definition, let $\pi:\{1,\dots, N \} \rightarrow \{1,\dots,N_{1}\}\times \dots\times \{1,\dots,N_{d}\}$
be a lexicographic ordering, i.e., the bijection defined by
$$
\pi:=(\pi_1,\ldots,\pi_d),\quad
\pi_\delta(n):=\left\lfloor \frac{(n-1) \bmod \Nprod{\delta} }{\Nprod{\delta-1} }\right\rfloor+1, \quad
\Nprod{\delta} := \prod_{i=1}^{\delta} N_i .
$$
We define $\sigma :\{1,\dots, M \} \rightarrow \{1,\dots,M_{1}\}\times \dots\times \{1,\dots,M_{d}\}$
and $\Mprod{\delta}$
analogously. Using these orderings, we denote the generating
functions for $\Phi$ and $\Psi$ as follows:
\begin{align*}
 \quad\phi_n(x_1,\dots,x_d)&= \phif1_{\pi_1(n)}(x_1)\cdots \phif d_{\pi_d(n)}(x_d), &&  n=1,\dots,N,\\
 \psi_m(x_1,\ldots,x_d) &= \psif1_{\sigma_1(m)}(x_1)\cdots \psif d_{\sigma_d(m)}(x_d), && m=1,\dots,M.
\end{align*}
\begin{note}
	When considering problems of the form \eqref{eq:quadrature:2}
	then the tensor-product structure~\eqref{eq:prelim-tp} of all the generating systems $\partial^\theta\Phi$ follows from that  of $\Phi$:
	$$
	\partial^{\theta}\Phi =(\tfrac{\partial^{\theta_1}}{\partial x^{\theta_1}}\cdots\tfrac{\partial^{\theta_d}}{\partial x^{\theta_d}} \phi_n)_{n=1}^{N}= \partial^{\theta_1}\Phi_1 \otimes \cdots\otimes\, \partial^{\theta_d}\Phi_d,
	$$
	where $\partial^{\theta_\delta}\Phi_\delta:=(\tfrac{\partial^{\theta_\delta}}{\partial x^{\theta_\delta}} \phif\delta_{n})_{n=1}^{N_\delta}$.
\end{note}

We also require a
\begin{itemize}
\item \emph{Tensor-product quadrature:} for each direction $\delta=1,\ldots,d$, there exists
a set of quadrature points $\Quad_\delta\subset\mathbb R$ and a weight function $\omega_\delta:\Quad_\delta\rightarrow\mathbb R$ such that
\begin{equation}
\label{eq:prelim-quad-tp}
\begin{aligned}
\Quad&=\Quad_1\times\ldots\times\Quad_d,\\
\omega(\textbf x)&=\omega_1(x_1)\ldots \omega_d(x_d).
\end{aligned}
\end{equation}
\end{itemize}
Analogous to $\Nprod{\delta}$, it is convenient to define
$
		\Qprod{\delta} :=\Quad_1\times\ldots\times\Quad_\delta 
$.

The method of sum  factorization is based on the observation that
expanding each term in~\eqref{eq:ass:1} with respect to its components  yields
\begin{equation}\nonumber
\begin{aligned}
	&\ah(\phi_n,\psi_m)
	=   \sum_{x_1 \in  \Quad_1} \cdots \sum_{x_d \in  \Quad_d}
								\prod_{\delta=1}^d\omega_\delta(x_\delta) \, \prod_{\delta=1}^d \phif\delta_{\pi_\delta(n)}  (x_\delta)\, \prod_{\delta=1}^d \psif\delta_{ \sigma_\delta(m)}  (x_\delta)\, \mathcal{F} (\textbf{x})\\
	&\ = \sum_{x_d \in  \Quad_d}\phif d_{\pi_d(n)} (x_d)\,
						\psif d_{\sigma_d(m)}  (x_d)\, \omega_d(x_d)\\
	&\hspace{1cm}\underbrace{
	\sum_{x_1 \in  \Quad_1} \cdots \hspace{-2.5ex}\sum_{x_{d-1} \in  \Quad_{d-1}}
		\prod_{\delta=1}^{d-1}\omega_\delta(x_\delta)\,
		\prod_{\delta=1}^{d-1}\phif\delta_{ \pi_\delta(n)} (x_\delta)\,
		\prod_{\delta=1}^{d-1}\psif\delta_{\sigma_\delta(m)} (x_\delta)\,
		\mathcal{F} (\textbf x)}_{\displaystyle = :\Pprod{\ah}{d-1}_{x_d}(\Pprod\phi{d-1}_n,\Pprod\psi{d-1}_m) },
 \end{aligned}
\end{equation}
that is
\begin{equation}\label{eq:recursive-step1}
\ah(\phi_n,\psi_m)=\sum_{x_d \in  \Quad_d}\phif d_{\pi_d(n)} (x_d)\,
					\psif d_{\sigma_d(m)}  (x_d)\, \omega_d(x_d)\,
					\Pprod\ah{d-1}_{x_d}(\Pprod\phi{d-1}_n,\Pprod\psi{d-1}_m) ,
\end{equation}
where
$$
		\Pprod\phi{\delta}_n  :=      \prod_{i=1}^\delta  \phif{i}_{\pi_i(n)}	\qquad \mbox{and}\qquad
		\Pprod\psi{\delta}_m  :=     \prod_{i=1}^\delta  \phif{i}_{\sigma_i(m)}	.
$$
The term  $\Pprod{\ah}{d-1}_{x_d}(\Pprod\phi{d-1}_n,\Pprod\psi{d-1}_m)$ is independent of the $d$-th components of $\Phi,\Psi$, $\omega$ and it appears for many $(\phi_n,\psi_m)$ pairs.
The advantage of \eqref{eq:recursive-step1} over \eqref{eq:ass:1} is that it shows that $\Pprod{\ah}{d-1}_{x_d}(\Pprod\phi{d-1}_n,\Pprod\psi{d-1}_m)$ can be computed once and used many times.
By rewriting $\Pprod{\ah}{d-1}_{x_d}(\Pprod\phi{d-1}_n,\Pprod\psi{d-1}_m)$ as
\begin{equation}\label{eq:last-dir-sum}
\Pprod{\ah}{d-1}_{x_d}(\Pprod\phi{d-1}_m,\Pprod\psi{d-1}_n)=\sum_{\textbf x\in\Qprod{d-1}}\Pprod{\omega}{d-1}(\textbf x) \Pprod{\phi_{n}}{d-1}(\textbf x)\Pprod{\psi_{m}}{d-1}(\textbf x) \mathcal{F} (\textbf x,x_d),
\end{equation}
we see that, analogously to \eqref{eq:ass:1}, it is an approximation with the quadrature $(\Qprod{d-1}, \Pprod\omega{d-1})$ of the bilinear form on $L^2([0,1]^{d-1})$ defined by $$\Pprod a{d-1}_{x_d}(\phi,\psi)=\int_{[0,1]^{d-1}} \phi(\textbf x)\psi(\textbf x) \mathcal F(\textbf x, x_d)\, d\textbf x.$$

Let $\Pprod{\Ah}{d-1}_{x_d}$ be the matrix representing the Petrov-Galerkin restriction of $\Pprod a{d-1}_{x_d}$ to $\SPAN\Pprod{\Phi}{d-1}\times\SPAN \Pprod{\Psi}{d-1}$, i.e.,
 $$
\Pprod{\Ah}{d-1}_{x_d}=[\Pprod{\ah}{d-1}_{x_d}(\Pprod\phi{d-1}_n,\Pprod\psi{d-1}_m)]^{m=1,\dots,\Mprod{d-1}}_{n=1,\dots,\Nprod{d-1}}.
$$

According to \eqref{eq:recursive-step1}, the entries of $\Ah$ are linear combinations of the entries in $\Pprod{\Ah}{d-1}_{x_d}$ for different $x_d\in\Quad_d$, but for the same $\Pprod\phi{d-1}_n$ and $\Pprod\psi{d-1}_m$.
This suggests the decomposition of $\Ah$ into blocks.
The $d$-th components of $\phi_n$ and $\psi_m$, i.e. $\phif d_{\pi_d (n)}$ and $\psif d_{\sigma_d (m)}$, identify a block of size $\Mprod{d-1}\times \Nprod{d-1}$. Let $\Block d_{i,j}$ denote these blocks as in
\begin{equation}\label{eq:matrix:parts}
\Pprod\Ah{d} =\begin{pmatrix} \Block d_{1,1} & \dots & \Block d_{1,N_d}\\
\vdots & &\vdots\\
 \Block d_{M_d,1} & \dots & \Block d_{M_d,N_d}
 \end{pmatrix}.
\end{equation}
The non-zero coefficients of $\Pprod{\Ah}{d-1}_{x_d}$ 
only depend on $\Pprod\Phi{d-1}$, $\Pprod\Psi{d-1}$
and $\Pprod\Quad{d-1}$ and are independent of $x_d$.
They determine a common maximum for the sparsity patterns of the blocks $\Block \delta_{m_\delta,n_\delta}$ that the following recursive assembling procedure uses for the sum in \eqref{eq:last-dir-sum}.

\begin{algorithm}[H]
	\caption{Recursive sum factorization}
	\label{alg-rec-sm}
	\begin{algorithmic}[1]
	\Procedure{Assemble}{$[ \Quad_\delta]_{\delta=1}^d,[\omega_\delta]_{\delta=1}^d, [\Phi_\delta]_{\delta=1}^d,[\Psi_\delta]_{\delta=1}^d,[\mathcal{F} (\textbf{x}) ]_{x\in\Quad}$}
	\If {d=0}
		\State \textbf{return} $[\mathcal{F} (\textbf{x})]_{x\in\Quad }$
	\EndIf
	\State $\Pprod\Ah d  \gets0$  \Comment{start with null matrix}
	\ForAll{$x_d\in \Quad_d$}     \Comment{sum all $\Pprod\Ah {d-1}_{x_d}$}
		\State 
		$\displaystyle\begin{aligned}\Pprod\Ah{d-1}_{x_d}  \gets  \textsc{Assemble} ( &
				[ \Quad_\delta]_{\delta=1}^{d-1},
				[\omega_\delta]_{\delta=1}^{d-1},\\
				&[\Phi_\delta]_{\delta=1}^{d-1},
				[\Psi_\delta]_{\delta=1}^{d-1},
				[\mathcal{F}(\textbf x, x_d)]_{\textbf x\in\Qprod{d-1 }} )\end{aligned}$
		\ForAll{ $n_d$ \textbf{with} $\phif d_{n_d}(x_d)\ne 0$}\label{alg_line:nd}
		\ForAll{ $m_d$ \textbf{with} $\psif d_{m_d}(x_d)\ne 0$}\label{alg_line:md}
		\State \mbox{$\Block d_{m_d,n_d}\gets \Block d_{m_d,n_d}+\omega_d(x_d)\phif d_{n_d} (x_d)\psif d_{m_d} (x_d) \Pprod\Ah{d-1}_{x_d}$} \Comment{ cf. \eqref{eq:matrix:parts}}
			\label{alg_line:assemb}
		\EndFor
 		\EndFor
	\EndFor
	\State \textbf{return} $\Ah=\Pprod\Ah d$
	\EndProcedure
	\end{algorithmic}
\end{algorithm}

\subsection{Complexity analysis}

For the complexity analysis, we need to bound the number
of non-vanishing matrix entries.
The coefficient $(n,m)$ of $\Ah$ can be non-zero only if there is
at least one quadrature node $\textbf x$ such that $\phi_n(\textbf x)\ne0$ and
$\psi_m(\textbf x) \ne 0$. (The other direction is not true.) 
We estimate the number from above by considering the convex hull of the
support (instead of the support itself). So, we know that
the coefficient $(n,m)$ of $\Ah$ can be non-zero only if
$(n,m)\in \NNZ(\Phi,\Psi,\Quad)$, where
$$
	\NNZ(\Phi,\Psi,\Quad):=\{(n,m): \Quad\cap\csupp\phi_n\cap\csupp\psi_m \ne \emptyset \},
$$
and $\csupp \phi$ is the convex hull of the support of $\phi$, i.e.,
$$
	\csupp \phi := \{ \alpha \textbf x + (1-\alpha) \textbf y  : \phi(\textbf x) \ne 0,\,
	\phi(\textbf y) \ne 0,\, \alpha\in[0,1] \}.
$$

\begin{note}
	This estimate of the sparsity pattern carries over to
	derivatives.
	Provided that $\partial^\theta\Phi$ and $\partial^\eta\Psi$ are defined on all quadrature
	points, we have
	$$
		\NNZ(\partial^\theta\Phi,\partial^\eta\Psi,\Quad)\subseteq\NNZ(\Phi,\Psi,\Quad).
	$$
	This means that for problems of the form \eqref{eq:quadrature:2} we can use the same sparsity pattern for all matrices $\Ah_{\theta,\eta}$ by storing the coefficients at row $m$ and column $n$ with $(n,m)\in\NNZ(\Phi,\Psi,\Quad)$.
	This simplifies the computation of the sum in \eqref{eq:A-sum}.
\end{note}

The number of non-zero entries of the matrix $\Ah$ (and later the complexity result)
will be expressed in terms of
the number of quadrature points ($\#\Quad_\delta$),
the number of trial functions ($N_\delta$),
the number of test functions ($M_\delta$) and the
\emph{overlap parameters} $p_{\delta}$ and $q_{\delta}$, defined by
\begin{align}\label{def:delta}
\begin{aligned}
p_{\delta}=\max_{x\in \Quad_\delta} \#\{\phi\in\Phi_\delta: x\in\csupp\phi \},\\
q_{\delta}=\max_{x\in \Quad_\delta} \#\{\psi\in\Psi_\delta: x\in\csupp\psi \},
\end{aligned}
\end{align}
where $\# T$ denotes the number of elements of the set $T$. The interpretation of
$p_\delta$ is as follows. At any quadrature node, no more than $p_\delta$ trial functions
are active (in the sense that the quadrature node belongs to the convex hull of the support).
The interpretation of $q_\delta$ is analogous.
For the convenience of the reader, we will often express the main results also in terms of $$p:=\max\{p_1,\ldots,p_d, q_1,\dots,q_d\}.$$

\begin{note}
	When $\Phi$ is a B-spline or NURBS basis of degree $\nu$ (order $\nu+1$), the condition \eqref{def:delta} holds with $p_\delta = \nu+1$. The same applies to $\Psi$ and $q_\delta$.
\end{note}

	First of all we compute a univariate upper bound on the number of non-zero coefficients.
	For $\phi\in\Phi_\delta$, $\csupp\phi$ is an interval which intersects $\Quad_\delta$ in a finite number of points.
Let $x_\phi$ be the leftmost point of $\csupp\phi\cap\Quad_\delta$, i.e.
\[
x_\phi=\min \big(\csupp\phi\cap\Quad_\delta\big).
\]
Define $x_\psi$ analogously for $\psi\in\Psi_\delta$.

\begin{lemma}\label{lemma:1D-NNZ}
For $\delta=1,\dots,d$
\[
	\NNZ(\Phi_\delta,\Psi_\delta,\Quad_\delta)=
	 \{(n,m): x_{\phi_n} \in \csupp \psi_m \}\cup 
	\{(n,m): x_{\psi_m} \in \csupp \phi_n \}.
\]
\end{lemma}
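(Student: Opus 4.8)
The plan is to establish the claimed identity by proving the two inclusions separately, exploiting that in the univariate setting every $\csupp\phi_n$ and $\csupp\psi_m$ is a bounded interval and hence convex, and that --- whenever it is defined --- the point $x_{\phi_n}$ is a genuine quadrature node lying in $\csupp\phi_n$ (and likewise $x_{\psi_m}\in\csupp\psi_m$). As a preliminary I would dispose of the degenerate indices: if $\csupp\phi_n\cap\Quad_\delta=\emptyset$ then $x_{\phi_n}$ is undefined, the pair cannot lie in $\NNZ$, and neither of the two conditions on the right-hand side can hold (the second would require the node $x_{\psi_m}$ to belong to $\csupp\phi_n$, which is impossible when no node lies in $\csupp\phi_n$). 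Hence such pairs are excluded from both sides, and we may assume throughout that both minima exist.

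For the inclusion $\supseteq$, suppose $x_{\phi_n}\in\csupp\psi_m$. By definition $x_{\phi_n}\in\Quad_\delta$ and $x_{\phi_n}\in\csupp\phi_n$, so $x_{\phi_n}\in\Quad_\delta\cap\csupp\phi_n\cap\csupp\psi_m$; this intersection is therefore nonempty and $(n,m)\in\NNZ(\Phi_\delta,\Psi_\delta,\Quad_\delta)$. The case $x_{\psi_m}\in\csupp\phi_n$ is identical with the roles of $\phi$ and $\psi$ swapped. This direction is routine.

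The inclusion $\subseteq$ is the heart of the matter. Suppose $(n,m)\in\NNZ$, so there is a node $x\in\Quad_\delta$ with $x\in\csupp\phi_n\cap\csupp\psi_m$. Then both minima are defined and satisfy $x_{\phi_n}\le x$ and $x_{\psi_m}\le x$. Without loss of generality assume $x_{\psi_m}\le x_{\phi_n}$ (the opposite case yields $x_{\psi_m}\in\csupp\phi_n$ by the symmetric argument). We then have the ordering $x_{\psi_m}\le x_{\phi_n}\le x$, with both outer points $x_{\psi_m}$ and $x$ lying in the interval $\csupp\psi_m$. By convexity of $\csupp\psi_m$, the intermediate point $x_{\phi_n}$ also lies in $\csupp\psi_m$, which places $(n,m)$ in the first set on the right-hand side and finishes the proof.

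I expect the only subtlety to be the reduction to the ordered case and the clean use of convexity in the final step: once one observes that it suffices to compare the two leftmost active nodes $x_{\phi_n}$ and $x_{\psi_m}$ and to invoke that $\csupp$ is an interval, there is no real obstacle, and in particular no appeal to the detailed structure of B-splines or NURBS is required.
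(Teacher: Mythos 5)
Your proof is correct and takes essentially the same route as the paper: the paper's proof merely reads off from Figure~\ref{fig:lem:nnz-1D} that a common quadrature node in $\csupp\phi_n\cap\csupp\psi_m$ forces $x_{\phi_n}\in\csupp\psi_m$ or $x_{\psi_m}\in\csupp\phi_n$, and your comparison of the two leftmost active nodes followed by convexity of $\csupp$ is exactly that picture made rigorous. Your treatment of the easy inclusion and of the degenerate indices with $\csupp\phi_n\cap\Quad_\delta=\emptyset$ fills in details the paper leaves implicit, but introduces no new idea.
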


\begin{figure}[th]
	\def\support#1#2#3{\fill[opacity=.2] (#1,#2) rectangle +(3,.5) node[midway,opacity=1] {#3}; \draw[opacity=.5,thin] (#1,-.2)-- +(0,#2+.7) (#1+3,-.2)-- +(0,#2+.7); }
	\def\quad{\foreach \t in {-0.6,-0.2,0.4,0.8,1.2,1.6,2.2,2.6,3.2,3.8,4.2,4.6,5.0,5.4}
	\fill (\t,0) circle (1pt);}
	\centering\begin{tikzpicture}
		\begin{scope}
			\support{0}{0}{$\csupp \phi$}
			\support{1.8}{.525}{$\csupp \psi$}
			\quad
			\draw[black] (0.4,0) circle (2pt) node[below] {$x_\phi$};
			\draw[black] (2.2,0) circle (2pt) node[below] {$x_\psi$};
			\draw  (-2,0) node  {\begin{minipage}{2cm}
				$x_{\psi}\in\csupp\phi$
				\\$x_{\phi}\not\in\csupp\psi$
			\end{minipage} };
		\end{scope}

		\begin{scope}[yshift=2cm]
			\support{1.8}{0}{$\csupp \phi$}
			\support{0}{.525}{$\csupp \psi$}
			\quad
			\draw[black] (0.4,0) circle (2pt) node[below] {$x_\psi$};
			\draw[black] (2.2,0) circle (2pt) node[below] {$x_\phi$};
			\draw  (-2,0) node  {\begin{minipage}{2cm}
				$x_{\psi}\not\in\csupp\phi$
				\\$x_{\phi}\in\csupp\psi$
			\end{minipage} };
		\end{scope}

		\begin{scope}[yshift=4cm]
			\support{0.2}{0}{$\csupp \phi$}
			\support{0}{.525}{$\csupp \psi$}
			\quad
			\draw[black] (0.4,0) circle (2pt) node[below] {$x_\phi=x_\psi$};
			\draw  (-2,0) node  {\begin{minipage}{2cm}
				$x_{\psi}\in\csupp\phi$
				\\$x_{\phi}\in\csupp\psi$
			\end{minipage} };
		\end{scope}

	\begin{scope}[yshift=-2cm]
		\support{2.9}{0}{$\csupp \phi$}
		\support{-.5}{.525}{$\csupp \psi$}
		\quad
		\draw[black] (-0.2,0) circle (2pt) node[below] {$x_\psi$};
		\draw[black] (3.2,0) circle (2pt) node[below] {$x_\phi$};
		\draw  (-2,0) node  {\begin{minipage}{2cm}
			$x_{\psi}\not\in\csupp\phi$
			\\$x_{\phi}\not\in\csupp\psi$
		\end{minipage} };
	\end{scope}
	\end{tikzpicture}
	\caption{The relative position of  $\csupp \psi$ and $\csupp \phi$ determines if
	$x_\phi$ belongs to $\csupp \psi$ and if $x_\psi$ to $\csupp \phi$. The dots are the quadrature points in $\Quad$.}\label{fig:lem:nnz-1D}
	\end{figure}
\begin{proof}
	From Figure~\ref{fig:lem:nnz-1D} we notice that if $(n,m)\in \NNZ(\Phi_\delta,\Psi_\delta,\Quad_\delta)$, i.e., $\Quad_\delta\cap\csupp\phi_n\cap\csupp\psi_m \ne \emptyset$, then $x_{\phi_n}\in\csupp \psi_m$ or $x_{\psi_m}\in\csupp \phi_n$.
\end{proof}	

\begin{corollary}\label{lem:nnz-1D}
	For all $\delta=1,\ldots,d$, we have
	$$
	\#\NNZ(\Phi_\delta,\Psi_\delta,\Quad_\delta)\le p_\delta M_\delta + q_\delta N_\delta.
	$$
\end{corollary}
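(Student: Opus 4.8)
The plan is to read off the characterization of the one-dimensional sparsity pattern provided by Lemma~\ref{lemma:1D-NNZ} and then bound the cardinality of each of the two sets in the union separately. Write
\[
	\NNZ(\Phi_\delta,\Psi_\delta,\Quad_\delta)=A\cup B,\qquad
	A=\{(n,m): x_{\phi_n}\in\csupp\psi_m\},\quad
	B=\{(n,m): x_{\psi_m}\in\csupp\phi_n\}.
\]
By subadditivity of the counting measure it suffices to show $\#A\le q_\delta N_\delta$ and $\#B\le p_\delta M_\delta$, after which $\#\NNZ(\Phi_\delta,\Psi_\delta,\Quad_\delta)\le\#A+\#B$ yields the claim.

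To bound $\#A$, I would count by fixing the trial index. For each of the $N_\delta$ functions $\phi_n\in\Phi_\delta$, the marker $x_{\phi_n}$ is a single, fixed element of $\Quad_\delta$, so the fibre $\{m:(n,m)\in A\}$ is exactly the set of test functions $\psi_m$ whose convex support contains the point $x_{\phi_n}$. Since that point lies in $\Quad_\delta$, the definition~\eqref{def:delta} of $q_\delta$ applies verbatim and bounds the size of this fibre by $q_\delta$. Summing over the $N_\delta$ choices of $n$ gives $\#A\le q_\delta N_\delta$. The estimate for $\#B$ is the mirror image: fixing each of the $M_\delta$ test indices $m$, the fibre $\{n:(n,m)\in B\}$ counts the trial functions active at the quadrature node $x_{\psi_m}$, which is at most $p_\delta$, so $\#B\le p_\delta M_\delta$.

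The only point that requires care — and the one place the argument could go wrong — is matching each fibre to the correct overlap parameter and checking that the marker points are genuinely quadrature nodes so that the maxima in~\eqref{def:delta} are applicable. By construction $x_{\phi_n}=\min(\csupp\phi_n\cap\Quad_\delta)$ and $x_{\psi_m}=\min(\csupp\psi_m\cap\Quad_\delta)$ are elements of $\Quad_\delta$, so both fibre counts are indeed dominated by the worst-case number of active functions over quadrature points. Assembling the two bounds then gives $\#A+\#B\le q_\delta N_\delta+p_\delta M_\delta$, which is the asserted inequality. I do not expect any further obstacle: the content is entirely in the geometric lemma already established, and the corollary is a short counting consequence of it.
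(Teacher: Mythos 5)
Your proof is correct and follows essentially the same route as the paper's: both apply the decomposition from Lemma~\ref{lemma:1D-NNZ} and bound each set by fixing one index and using the definition~\eqref{def:delta} of the overlap parameters, with the fibre over a fixed trial index $n$ bounded by $q_\delta$ and the fibre over a fixed test index $m$ bounded by $p_\delta$. Your additional observation that $x_{\phi_n}$ and $x_{\psi_m}$ lie in $\Quad_\delta$, so that \eqref{def:delta} genuinely applies, is a worthwhile explicit check that the paper leaves implicit.
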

\begin{proof}
	From \eqref{def:delta} we deduce that for any fixed $n$, $\#\{(n,m): x_{\phi_n} \in \csupp \psi_m \}\le q_\delta$.
	Thus the first set in Lemma~\ref{lemma:1D-NNZ} contains at most $q_\delta N_\delta$ elements.
	Similarly, for $m$ fixed, $\#\{(n,m): x_{\psi_m} \in \csupp \phi_m \}\le p_\delta$ and the second set contains at most $p_\delta M_\delta$ elements.
\end{proof}

Observe that Corollary~\ref{lem:nnz-1D} gives an upper bound for the size of the
sparsity pattern. For the case of B-splines or NURBS, the following Corollary gives a
precise statement on the size of $\#\NNZ(\Phi_\delta,\Psi_\delta,\Quad_\delta)$.

\begin{corollary}\label{lem:nnz-B-spline}
	Let $\Phi_\delta$ and $\Psi_\delta$ be univariate B-spline or NURBS bases of order $p_\delta$ and $q_\delta$, respectively, defined over open knot vectors on the interval $[a,b]$.
	Further, let $\Xi$ be the union of knot values in the knot vectors of $\Phi_\delta$ and $\Psi_\delta$ and
	let $\mu_\Phi(\xi)$ and $\mu_\Phi(\xi)$ be the multiplicity of $u$ in the knot vector of $\Phi_\delta$ or $\Psi_\delta$, respectively.
	If for all $\xi_1<\xi_2\in \Xi$ there is some
	$x\in\Quad_\delta\cap(\xi_1,\xi_2)$, i.e., there is at least one
	quadrature point between any two consecutive knots, then  
	\[
		\#\NNZ(\Phi_\delta,\Psi_\delta,\Quad_\delta) 
		 = q_\delta N_\delta + p_\delta M_\delta - \sum_{\xi\in \Xi\setminus\{b\}} \mu_\Phi(\xi)\mu_\Psi(\xi).
	\]
\end{corollary}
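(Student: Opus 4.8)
The plan is to refine the counting in Corollary~\ref{lem:nnz-1D} into an exact identity by applying the inclusion–exclusion principle to the two sets appearing in Lemma~\ref{lemma:1D-NNZ}. Writing $S_1=\{(n,m):x_{\phi_n}\in\csupp\psi_m\}$ and $S_2=\{(n,m):x_{\psi_m}\in\csupp\phi_n\}$, Lemma~\ref{lemma:1D-NNZ} gives $\NNZ(\Phi_\delta,\Psi_\delta,\Quad_\delta)=S_1\cup S_2$, so that
\[
\#\NNZ(\Phi_\delta,\Psi_\delta,\Quad_\delta)=\#S_1+\#S_2-\#(S_1\cap S_2).
\]
I would establish that the two positive terms are exactly $\#S_1=q_\delta N_\delta$ and $\#S_2=p_\delta M_\delta$, and that the correction term is $\#(S_1\cap S_2)=\sum_{\xi\in\Xi\setminus\{b\}}\mu_\Phi(\xi)\mu_\Psi(\xi)$.

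For the first claim, the key structural observation is that for every $\phi_n$ the distinguished quadrature point $x_{\phi_n}$ lies in the \emph{interior} of a knot span. Indeed, $x_{\phi_n}$ is the leftmost quadrature point lying to the right of the left endpoint of $\csupp\phi_n$ (a knot of $\Phi_\delta$), and by hypothesis there is a quadrature point between that knot and the next element of $\Xi$; hence $x_{\phi_n}$ falls strictly between two consecutive knots of $\Xi$ and is therefore not a knot. At such a point exactly $q_\delta$ of the B-splines in $\Psi_\delta$ are nonzero, by the standard property of order-$q_\delta$ B-splines on an open knot vector (here one uses that, as with Gauss rules, quadrature points do not coincide with knots, so the boundary functions cause no exception). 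This upgrades the inequality $\#\{m:x_{\phi_n}\in\csupp\psi_m\}\le q_\delta$ from the proof of Corollary~\ref{lem:nnz-1D} to an equality, and summing over the $N_\delta$ indices $n$ gives $\#S_1=q_\delta N_\delta$; the identity $\#S_2=p_\delta M_\delta$ follows symmetrically.

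For the correction term, I would first show $S_1\cap S_2=\{(n,m):x_{\phi_n}=x_{\psi_m}\}$: if both $x_{\phi_n}\in\csupp\psi_m$ and $x_{\psi_m}\in\csupp\phi_n$ hold, then each of these two quadrature points lies in $\csupp\phi_n\cap\csupp\psi_m$, and minimality of $x_{\phi_n}$ in $\csupp\phi_n$ together with minimality of $x_{\psi_m}$ in $\csupp\psi_m$ forces $x_{\phi_n}=x_{\psi_m}$. Next, $x_{\phi_n}$ depends only on the left endpoint of $\csupp\phi_n$: two functions sharing this left knot produce the same first quadrature point, while two distinct knots of $\Xi$ are separated by a quadrature point (hypothesis) and hence yield different values. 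Thus $x_{\phi_n}=x_{\psi_m}$ if and only if $\csupp\phi_n$ and $\csupp\psi_m$ have the same left endpoint $\xi\in\Xi$. Finally I would count: the number of basis functions of $\Phi_\delta$ whose support starts at a given $\xi$ equals the multiplicity $\mu_\Phi(\xi)$, with the sole exception of $\xi=b$, at which no support can start (the $p_\delta$ copies of $b$ in the open knot vector sit beyond the last basis-function index); the same holds for $\Psi_\delta$ with $\mu_\Psi$. Multiplying these independent choices and summing over the admissible left endpoints yields $\#(S_1\cap S_2)=\sum_{\xi\in\Xi\setminus\{b\}}\mu_\Phi(\xi)\mu_\Psi(\xi)$, and substituting into the inclusion–exclusion formula gives the claim.

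I expect the main obstacle to be the exact evaluation $\#S_1=q_\delta N_\delta$ (and its symmetric counterpart) rather than the combinatorics of the correction term. It hinges on controlling precisely how many test functions are active at each special node $x_{\phi_n}$, and this is exactly where the hypothesis on the quadrature---one point between consecutive knots, together with nodes avoiding the knots themselves---is indispensable: without it $x_{\phi_n}$ could land on a knot, where fewer than $q_\delta$ functions are active, and the clean product count, and with it the boundary bookkeeping that removes $\xi=b$, would fail.
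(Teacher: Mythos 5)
Your proof is correct and follows essentially the same route as the paper's: the same inclusion--exclusion over the two sets from Lemma~\ref{lemma:1D-NNZ}, the same minimality argument identifying $S_1\cap S_2$ with $\{(n,m):x_{\phi_n}=x_{\psi_m}\}$, and the same count of coinciding first knots via multiplicities with $\xi=b$ excluded. Your only addition is to make explicit the tacit assumption that quadrature nodes avoid knots (needed so that exactly $q_\delta$ test functions are active at each $x_{\phi_n}$), which the paper leaves implicit in its assertion that \eqref{def:delta} holds with equality at the special nodes.
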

\begin{proof}
Let $S_\Phi=\{(n,m): x_{\phi_n} \in \csupp \psi_m \}$ and $S_\Psi=\{(n,m): x_{\psi_m} \in \csupp \phi_n \}$.
Note that \eqref{def:delta} holds with equality for all quadrature nodes $x_{\phi_n}$ and
$x_{\psi_m}$. Consequently, we have
$$
	\# S_\Phi= q_\delta N_\delta,\qquad \# S_\Psi=p_\delta M_\delta.
$$
Since $x_{\phi_n} \in \csupp \psi_m\Rightarrow  x_{\phi_n}\ge x_{\psi_m}$ and $x_{\psi_m} \in \csupp \phi_n\Rightarrow  x_{\psi_m}\ge x_{\phi_n}$ we deduce that 
$$
	S_\Phi\cap S_\Psi=\{(n,m): x_{\phi_n}=x_{\psi_m}\}.
$$
By assumption there is a node of $\Quad_\delta$ between any two knots in $\Xi$, thus $(n,m)\in S_\Phi\cap S_\Psi$ implies that the first knot of $\phi_n$ equals the first knot of $\psi_m$. As each knot $\xi\in\Xi$ is the first knot for $\mu_\Phi(\xi)$ functions in $\Phi_\delta$ and $\mu_\Psi(\xi)$ functions in $\Psi_\delta$ we deduce that $\mu_\Phi(\xi)\mu_\Psi(\xi)$ pairs are in $S_\Phi\cap S_\Psi$ and the thesis follows.
\end{proof}

For the complexity analysis, we assume that the values of~$\mathcal F$, $\phi_n$ and $\psi_m$ are already
available; cf. Section~\ref{sec:6} on evaluating them and the related computational complexity.
The recursive assembling procedure provides a recursive formula for the computational cost for assembling:
$$
\Pprod\Cost d \eqsim \# \Quad_d \big (\Pprod\Cost{d-1} + p_d\, q_d\  \nnz (\Pprod{\Ah}{ d-1}_{x_d})\big ),
$$
where $\Pprod\Cost{d-1}$ is the cost of assembling $\Pprod{\Ah}{ d-1}_{x_d}$ and
$\nnz (\Pprod{\Ah}{ d-1}_{x_d})$ is the number of non-zero entries of $\Pprod{\Ah}{ d-1}_{x_d}$.
To keep the notation tight,
here and in what follows,  $a\lesssim b$ means that there is a constant $c>0$, independent of $p_\delta$, $N_\delta$, $q_\delta$, $M_\delta$ and $\Quad_\delta$, but possibly depending $d$ such that $a \le c\,b$.
Moreover, $a\eqsim b$ means that $a\lesssim b$ and $b \lesssim a$.
The factor $p_d\,q_d$ comes from the number of iterations at line \ref{alg_line:nd} and
\ref{alg_line:md} that are bounded using \eqref{def:delta}.
Letting $\Pprod\Ah{d-2}_{x_{d-1},x_d}$ be the analog of $\Pprod{\Ah}{ d-1}_{x_d}$ and so on till $\Pprod\Ah0_{\textbf x}= \mathcal F(\textbf x)$ and expanding the recursive cost formula yields
\begin{equation}
\label{sum factorization-nnz}
\Pprod\Cost{d}\eqsim \sum_{i=1}^d \Big(\prod_{\delta=i+1}^{d} \# \Quad_\delta\Big)  (p_i\,q_i \# \Quad_i ) \ \nnz (\Pprod\Ah{i-1}_{x_i,\dots,x_d}).
\end{equation}
Lemma~\ref{lem:nnz-1D} and the Kronecker like position of the non-zero coefficients, see \eqref{eq:matrix:parts}, allow us to bound the number of non-zero entries as follows:
\begin{equation}\label{eq:nnz}
\nnz(\Pprod\Ah{i-1}_{x_i,\dots,x_d})\le \prod_{\delta=1}^{i-1} (q_\delta N_\delta + p_\delta M_\delta),
\end{equation}
which yields
\begin{equation}
\label{sum factorization}
\begin{aligned}
\Pprod\Cost{d}&\eqsim \sum_{i=1}^d p_i q_i \Big(\prod_{\delta=i}^{d} \# \Quad_\delta\Big)  \Big(\prod_{\delta=1}^{i-1} (q_\delta N_\delta + p_\delta M_\delta)\Big)
\\
&\le\Big(\sum_{i=1}^d p_i q_i\Big) \#\Quad_d  \prod_{\delta=1}^{d-1} \max\{\#\Quad_\delta,q_\delta N_\delta + p_\delta M_\delta\} .
\end{aligned}
\end{equation}
To compare the above with the standard approaches we assume $M_\delta\lesssim N_\delta$ and consider the following cases
\begin{align}
\label{sum factorization-N}
&\forall \delta,\,\#\Quad_\delta\lesssim  q_\delta N_\delta + p_\delta M_\delta &\Rightarrow&&& \Pprod\Cost{d}\lesssim p^{d+2} \, N ,\\
\label{sum factorization-N2}
&\forall \delta,\,\#\Quad_\delta\lesssim   N_\delta    &\Rightarrow&&& \Pprod\Cost{d}\lesssim p^{d+1} \, N,\\
\label{sum factorization-Q}
&\forall \delta,\,\#\Quad_\delta\gtrsim   q_\delta N_\delta + p_\delta M_\delta &\Rightarrow&&& \Pprod\Cost{d}\lesssim  p^2\,  \#\Quad.
\end{align}

The above formulas explain the costs for isogeometric discretizations using Gauss and weighted quadrature as shown in the following examples.

\begin{example}[Gauss quadrature]\label{example:gauss}
In Isogeometric Analysis (cf. Examples~\ref{example:second-order}
and~\ref{example:stokes}), typically spline spaces are
used for discretization. The breakpoints of these spline spaces introduce a decomposition of the parameter domain into $\prod_{\delta=1}^d K_\delta$ elements with
\[
K_\delta \le N_\delta - p_\delta +1.
\]
On each element the splines are polynomials and are typically integrated using tensor-product Gauss quadrature with $p_1\times\ldots\times p_d$ quadrature points per element.

Using~\eqref{sum factorization-N}, we obtain
\begin{align*}
\#\Quad_\delta=p_\delta K_\delta \le p_\delta N_\delta \quad\mbox{and}\quad  \Pprod\Cost{d}\lesssim p^{d+2}\, N .
\end{align*}
\end{example}
\medskip
\begin{example}[Weighted quadrature]\label{example:weighted-quad}
A way to reduce the computational cost is to reduce the number of quadrature points.
In~\cite{Calabro:2017} it was shown that
this can be done without loosing accuracy using  test-function dependent weights:
$$
\omega(\textbf x, m)=\prod_{\delta=1}^d \omega_\delta( x_\delta, \sigma_\delta(m) ).
$$
The adaptation of Algorithm~\ref{alg-rec-sm} to test-function-dependent quadrature formulas is straightforward. Indeed,
it can equivalently be thought as a quadrature with constant weights $\omega(\textbf x)=1$ and a different test
space $\widetilde\Psi$ whose functions are pre-multiplied by the quadrature weights.
The construction in~\cite{Calabro:2017} uses $\# \Quad_\delta \lesssim N_\delta $ quadrature points per direction and the reported cost $\Pprod\Cost{d}\lesssim  p^{d+2} N$
is explained by~\eqref{sum factorization-N2}.

Note however that this strategy breaks the symmetry
between test and trial functions and consequently the symmetry of the assembled matrix.
\end{example}

\subsection{Matrix-free application}

Iterative solvers typically require only the ability to compute matrix-vector products $\textbf v=\Ah \textbf u$, while a direct access to the entries of $\Ah$ is not required.
Thus the assembling cost can be avoided and the solving time can be improved.
The definition given by~\eqref{eq:ass:0} and~\eqref{eq:ass:1} yields
$$
\Ah \textbf u=  \Big( \sum_{\textbf{x}\in \Quad} \omega(\textbf x)\;
	\psi_m(\textbf x) \;  u(\textbf x)   \Big)_{m=0}^M,
\quad
\mbox{where}
\quad
u(\textbf x)=\sum_{n=1}^N \textbf u_n \phi_n(\textbf x).
$$

Algorithm~\ref{alg-rec-sm} computes $\textbf v$ if $u$ is incorporated in the weight $\mathcal F$ and $\Phi$ contains only the constant $1$.
In this setting the matrix $\Ah$ corresponds to $\textbf v$ and the blocks $\Block d_{i,j}$ to blocks $\Blocka d_i$ containing $\Mprod{d-1}$ coefficients of $\textbf v$:
$$
\textbf v=(\underbrace{\textbf v_1,\dots,\textbf v_{\Mprod{d-1} }}_{\Blocka d_1},\dots,\underbrace{\textbf v_{\Mprod{d}-\Mprod{d-1}+1},\dots,\textbf v_{\Mprod{d} }}_{\Blocka d_{M_d}}).
$$
With these changes and by eliminating the loop at line \ref{alg_line:nd} of Algorithm~\ref{alg-rec-sm} the following matrix-free algorithm is obtained.

\begin{algorithm}[H]
	\caption{Matrix-free sum factorization}
	\label{alg-mf-sm}
	\begin{algorithmic}[1] 
	\Procedure{Apply}{$[ \Quad_\delta]_{\delta=1}^d,[\omega_\delta]_{\delta=1}^d, [\Psi_\delta]_{\delta=1}^d,[\mathcal{F} (\textbf{x})u(\textbf{x}) ]_{x\in\Quad}, $}
	\If {d=0}
		\State \textbf{return} $[\mathcal{F} (\textbf{x})u(\textbf{x})]_{x\in\Quad }$
	\EndIf
	
	\State $\Pprod{\textbf v} d  \gets0$  \Comment{start with null vector}
	\ForAll{$x_d\in \Quad_d$}     \Comment{sum all $\Pprod{\textbf v} {d-1}_{x_d}$}
		\State $\Pprod{\textbf v}{d-1}_{x_d}  \gets $\textsc{Apply}$([ \Quad_\delta]_{\delta=1}^{d-1},[\omega_\delta]_{\delta=1}^{d-1}, [\Psi_\delta]_{\delta=1}^{d-1},[\mathcal{F}(\textbf x, x_d)u(\textbf x, x_d)]_{\textbf x\in\Qprod{d-1 }} )$
		\ForAll{ $m_d$ \textbf{with} $\psif d_{m_d}(x_d)\ne 0$}\label{alg_line:md2}
		\State \mbox{$\Pprod{\textbf{w}}{d}_{m_d}\gets \Pprod{\textbf{w}}{d}_{m_d}+\omega_d(x_d)
\psif d_{m_d} (x_d) \Pprod{\textbf v}{d-1}_{x_d}$} 
			\label{alg_line:assemb2}
		\EndFor
	\EndFor
	\State \textbf{return} $\textbf v=\Pprod{\textbf v} d$
	\EndProcedure
	\end{algorithmic}
\end{algorithm}

The cost of Algorithm~\ref{alg-mf-sm}, excluding the evaluation of $u(\textbf x)\mathcal F(\textbf x)$, can be deduced from \eqref{sum factorization-nnz} by replacing $\nnz(\Pprod\Ah{i-1}_{x_i,\dots,x_d})$ with $\Mprod{i}$, and $p_\delta$ with one. This yields
\begin{equation}
\label{matrix-less}
\Pprod\Cost{d}_{app}\eqsim \sum_{i=1}^d q_i \Big(\prod_{\delta=i}^{d} \# \Quad_\delta\Big) \ \Mprod {i-1}\le \sum_{i=1}^d q_i \Quad_d \prod_{\delta=1}^{d-1} \max\{\#\Quad_\delta, M_{\delta}\}.
\end{equation}
The overall cost is bounded as follows
\begin{align}
\label{matrix-less-N}
&\forall \delta,\,\#\Quad_\delta\lesssim  p M_\delta   &\Rightarrow&&& \Pprod\Cost{d}_{app}\lesssim   p^{d+1} M,\\
\label{matrix-less-N2}
&\forall \delta,\,\#\Quad_\delta\lesssim  M_\delta    &\Rightarrow&&& \Pprod\Cost{d}_{app}\lesssim   p\, M,\\
\label{matrix-less-Q}
&\forall \delta,\,\#\Quad_\delta\gtrsim  M_\delta &\Rightarrow&&& \Pprod\Cost{d}_{app}\lesssim   p\,  \#\Quad .
\end{align}

\section{Localized sum factorization}\label{sec:4}

We have described sum factorization as a global assembling procedure that requires a tensor-product discretization and the pre-computation of 
$\mathcal F$ on all the quadrature nodes in $\Quad$.
This contrasts to the usual FEM assembling procedure which allows for unstructured grids and is performed locally in an element-by-element fashion.
The idea of this section is to apply the methods from Section~\ref{sec:3} locally, i.e., on sub-domains. This accomplishes three distinct goals.

First, it shows that the complexity estimates apply to a broader class of problems, for instance to multi-patch isogeometric discretizations or to discretizations based on locally refined splines such as hierarchical B-splines, T-splines or LR-splines.

Second, by applying sum factorization on sub-domains, even for globally tensor-product discretizations, we greatly reduce the memory requirements. Indeed, the coefficient $\mathcal F$ of the bilinear form  is pre-computed on a fraction of quadrature points at a time.
This also improves data-locality that might lead to better cache utilization.

Finally, the localized approach allows for the same parallelization strategy as in the finite element method. Different sub-domains can be assigned to different execution units and synchronization needs to be employed only for updates to the resulting matrix or vector.

\subsection{Prerequisites and complexity analysis}

Let $\PPart$ be a collection of pairwise disjoint $d$-dimensional boxes $\PBox=\bigtimes_{\delta=1}^d \PBox_\delta$.
We consider a bilinear form $a$ of the type
\begin{equation}\label{eq:high-order-local}
	a(u,v)
	:= 	\sum_{\PBox\in\PPart}\int_{\PBox}\mathcal F (\textbf{x})\;
				 u(\textbf{x})\;
				 v (\textbf{x})\;
				 \mathrm{d} \textbf{x}.
\end{equation}
For each $\PBox$ let $(\Quad_\PBox,\omega_\PBox)$ be a tensor-product quadrature \eqref{eq:prelim-quad-tp}, and let $\ah$ be the approximation of $a$ computed using the quadratures $(\Quad_\PBox,\omega_\PBox)$ as in \eqref{eq:ass:1}.
Let $\Phi=(\phi_n)_{n=1}^N$ and $\Psi=(\psi_m)_{m=1}^M$ be generating systems of functions $\bigcup_{\PBox\in\PPart}\PBox\rightarrow \mathbb R$, and $\Ah$ be the matrix associated to the restriction of $\ah$ to $\SPAN\Phi\times\SPAN\Psi$.

Furthermore, for $\PBox\in \PPart$, let $\Phi_\PBox=(\phi_{\PBox,n})_{n=1}^{N_\PBox}$ and $\Psi_\PBox=(\psi_{\PBox,m})_{m=1}^{M_\PBox}$ be tensor-product generating systems, as in \eqref{eq:prelim-tp}, that contain the restrictions of $\Phi$ and $\Psi$, respectively. This means
that
\begin{equation}\label{eq:local-subset}
\begin{aligned}
&\forall \, n=1,\dots,N,\   &&\phi_n|_{\PBox}\in \Phi_\PBox\cup\{0\},\\
&\forall \, m=1,\dots,M,\  &&\psi_m|_{\PBox}\in \Psi_\PBox\cup\{0\}.
\end{aligned}
\end{equation}
Let $N_{\PBox,\delta}$, $M_{\PBox,\delta}$, $p_{\PBox,\delta}$, $q_{\PBox,\delta}$  be the analogous of $N_\delta$, $M_\delta$, $p_\delta$ and $q_\delta$.
The complexity results are expressed in terms of $N$, $M$, $\#\Quad_{\PBox}$,
 \[
	p=\max_{\PBox\in\PPart}\max_{\delta=1,\dots,d}\max\{p_{\PBox,\delta},q_{\PBox,\delta}\},
\]
and the \emph{repetition ratio} $R$
\begin{align}\label{eq:bounded-repetition}
	R:=\max\Big\{ \frac{\sum_{\PBox\in\PPart} N_\PBox}N, \frac{\sum_{\PBox\in\PPart} M_\PBox}M \Big\}.
\end{align}

\begin{remark}\label{lem:nnz-overlap}
Note that  
\[R \le \max\Big\{ \max_{\phi\in\Phi} \#\{\PBox\in \PPart:  \phi|_{\PBox}\ne 0 \},\, \max_{\psi\in\Psi} \#\{\PBox\in \PPart: \psi|_{\PBox}\ne 0 \}\Big\}.
\]
\end{remark}

Let $\Ah_\PBox$ be the matrix representation of $\ah$ restricted to $\SPAN\Phi_\PBox\times \SPAN\Psi_\PBox$ computed using $(\Quad_\PBox,\omega_\PBox)$.
By \eqref{eq:local-subset}, we have 
\begin{equation}\label{eq:loc-basis-change}
\Ah= \sum_{\PBox\in\PPart} S_{\Psi,\PBox}^\top \Ah_\PBox  S_{\Phi,\PBox},
\end{equation}
where the matrices $S_{\Phi,\PBox}$ and $S_{\Psi,\PBox}$ are selection matrices whose columns and rows contain at most one non zero coefficient.
Consequently, the cost of assembling $\Ah$ decomposes in two parts:
\begin{equation}\label{eq:cost-decomposition}
\Cost = \Cost_{ass} + \Cost_{acc} ,
\end{equation}
where $\Cost_{ass}$ is the cost of assembling the matrices $\Ah_\PBox$ for all $\PBox\in\PPart$ and
$\Cost_{acc}$ is the cost of accumulating the $\Ah_\PBox$ into $\Ah$.

The accumulation cost $\Cost_{acc}$ is determined by the number of coefficient additions in \eqref{eq:loc-basis-change}.
Recalling that $S_{\Phi,\PBox}$, $S_{\Psi,\PBox}$ are selection matrices, using \eqref{eq:nnz} for estimating $\nnz(\Ah_\PBox)$, and assuming $M_{\PBox,\delta}\lesssim N_{\PBox,\delta}$  we have
\[
\begin{aligned}
\Cost_{acc} &\lesssim \sum_{\PBox\in\PPart} \#\NNZ(\Phi_\PBox,\Psi_\PBox,\Quad_\PBox)
\le \sum_{\PBox\in\PPart} \prod_{\delta=1}^d (q_\delta N_{\PBox,\delta}+ p_\delta M_{\PBox,\delta})
\\&\lesssim  p^d \sum_{\PBox\in\PPart}N_\PBox\lesssim p^d \,R\,N.
\end{aligned}
\]

The cost of assembling each $\Ah_{\PBox}$ using the algorithm from Section~\ref{sec:3} is given by \eqref{sum factorization-N}, \eqref{sum factorization-N2} and \eqref{sum factorization-Q}.
Assuming $M_{\PBox,\delta}\lesssim N_{\PBox,\delta}$ as above, summing over $\PBox\in\PPart$, and using \eqref{eq:bounded-repetition} we notice that $\Cost_{ass}\gtrsim\Cost_{acc}$ for all cases. Consequently we have:
\begin{align}
\label{localized-N}
&\forall \PBox,\,\delta,\ \#\Quad_{\PBox,\delta}\lesssim q_{\PBox,\delta} N_{\PBox,\delta} + p_{\PBox,\delta} M_{\PBox,\delta} 
 &\Rightarrow&&&  \Cost \lesssim   R\, p^{d+2}\, N,\\
\label{localized-N2}
&\forall \PBox,\,\delta,\ \#\Quad_{\PBox,\delta}\lesssim N_{\PBox,\delta}       &\Rightarrow&&&  \Cost \lesssim  R\, p^{d+1}\, N,\\
\label{localized-Q}
&\forall \PBox,\,\delta,\ \#\Quad_{\PBox,\delta}\gtrsim q_{\PBox,\delta} N_{\PBox,\delta} + p_{\PBox,\delta} M_{\PBox,\delta}
&\Rightarrow&&&  \Cost \lesssim   p^2  \sum_{\PBox\in\PPart}\#\Quad_{\PBox}.
\end{align}

Similar reasoning can be used for the application of $\Ah$ to a vector $\textbf u$.
We have
\[
\textbf v:=\Ah\textbf u = \sum_{\PBox\in\PPart} S_{\Psi,\PBox}^\top \Ah_{\PBox} S_{\Phi,\PBox} \textbf u.
\]
The cost of applying $\Ah_{\PBox}$, excluding the evaluation of $u(\textbf{x})\mathcal F(\textbf{x})$, is as in \eqref{matrix-less-N}, \eqref{matrix-less-N2} and \eqref{matrix-less-Q}.
Relabeling and accumulating onto $\textbf v$ has cost $\lesssim R\, M$. The cost is dominated by the application of $\Ah_{\PBox}$, and we have 
\begin{align}
\label{matrix-less-N-local}
&\forall \PBox,\,\delta,\,\#\Quad_{\PBox,\delta}\lesssim  p M_{\PBox,\delta}   &\Rightarrow&&& \Cost_{app}\lesssim  R\, p^{d+1}\,  M,\\
\label{matrix-less-N2-local}
&\forall \PBox,\,\delta,\,\#\Quad_{\PBox,\delta}\lesssim  M_{\PBox,\delta}    &\Rightarrow&&& \Cost_{app}\lesssim R\,p\,M,\\
\label{matrix-less-Q-local}
&\forall \PBox,\,\delta,\,\#\Quad_{\PBox,\delta}\gtrsim M_{\PBox,\delta}     &\Rightarrow&&&  \Cost_{app}\lesssim  p\, \sum_{\PBox\in\PPart}\#\Quad_{\PBox}.
\end{align}

We conclude that, in all considered cases, the localized approach has in the worst case a cost that is $R$ times that of the global sum factorization.

\subsection{Localized sum factorization for tensor-product B-spline bases}\label{sec:localized1}

In this subsection we consider the application of localized sum factorization to an isogeometric Galerkin discretization for which $\Phi=\Psi$ is a B-spline basis defined on $[0,1]^d$.
Our aim is to determine how the assembling cost varies depending on the size of the boxes $\PBox$ of $\PPart$.

The breakpoints of $\Phi$ define a partitioning  of $[0,1]^d$ (B\'ezier mesh)
into elements $K$ on which the basis functions coincide with polynomial functions.
For simplicity, we restrict our analysis to partitions $\PPart$ in which the boundaries of the boxes $\PBox$ are aligned with the boundaries of the elements. This means that the interior of no element intersects two boxes.

The restriction to axes-aligned boxes does not destroy the tensor-product structure.
This means that \eqref{eq:prelim-tp} is satisfied by 
		\[
			\Phi_\PBox:=\Psi_\PBox := \{ \phi_n|_{\PBox} : n=1,\ldots,N \} \backslash \{0\}.
		\]

We bound $R$ based on the minimum size of the boxes $\PBox\in\PPart$.

\begin{lemma}\label{remark:macro-size}
If $\Phi=\Psi$ is a tensor-product B-spline basis of order $p_1,\ldots,p_d$ and each $\PBox\in\PPart$ contains at least $s_1\times\ldots\times s_d \ge 1$ elements, we have
\[
	R \le \prod_{\delta=1}^d \left \lceil \frac { s_\delta+p_\delta -1}{s_\delta} \right\rceil.
\]
\end{lemma}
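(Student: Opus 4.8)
The plan is to reduce the bound on the global quantity $R$ to a one-dimensional counting problem, exploiting that $\Phi=\Psi$ is a tensor-product basis and that the boxes are aligned with the element grid. Since $\Phi=\Psi$ we have $N=M$ and $N_\PBox=M_\PBox$ for every $\PBox$, so $R=\big(\sum_{\PBox\in\PPart}N_\PBox\big)/N$. Writing $N_\PBox=\#\{n:\phi_n|_\PBox\neq0\}$ and exchanging the order of summation gives $\sum_{\PBox}N_\PBox=\sum_{n=1}^N\#\{\PBox\in\PPart:\phi_n|_\PBox\neq0\}$, whence $R\le\max_{n}\#\{\PBox\in\PPart:\phi_n|_\PBox\neq0\}$; this is precisely the inequality already recorded in Remark~\ref{lem:nnz-overlap}. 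It therefore suffices to bound, for a single fixed basis function $\phi_n$, the number of boxes on which it does not vanish.

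Next I would use the tensor-product structure to factor this count across the $d$ coordinate directions. By \eqref{eq:prelim-tp} we have $\phi_n=\phif1_{\pi_1(n)}\cdots\phif d_{\pi_d(n)}$, and since each $\PBox=\bigtimes_{\delta}\PBox_\delta$ is itself a product of element-aligned intervals, the restriction $\phi_n|_\PBox$ is nonzero exactly when $\phif\delta_{\pi_\delta(n)}$ is non-vanishing on $\PBox_\delta$ for every $\delta$. Reading the element-aligned partition as a tensor grid in the element directions, the set of non-vanishing boxes is then the Cartesian product of the admissible index sets in each direction, so that $\#\{\PBox:\phi_n|_\PBox\neq0\}=\prod_{\delta=1}^d\#\{\PBox_\delta:\phif\delta_{\pi_\delta(n)}|_{\PBox_\delta}\neq0\}$. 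This reduces the estimate to a one-dimensional count in each direction.

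Finally I would carry out the one-dimensional count. A univariate B-spline of order $p_\delta$ is supported on at most $p_\delta$ consecutive elements, while by assumption every interval $\PBox_\delta$ consists of at least $s_\delta$ consecutive elements. The extremal configuration is a run of exactly $p_\delta$ elements overlapping as many length-$\ge s_\delta$ intervals as possible: the two end intervals may contribute a single element each, whereas every interior interval must contribute at least $s_\delta$ elements, so for $g$ overlapping intervals one needs $p_\delta\ge 2+(g-2)s_\delta$. Solving for $g$ and using the identity $\lfloor(p_\delta-2)/s_\delta\rfloor+1=\lceil(p_\delta-1)/s_\delta\rceil$ gives $g\le\lceil(s_\delta+p_\delta-1)/s_\delta\rceil$; taking the product over $\delta$ yields the claimed bound on $R$.

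I expect the main obstacle to be the combinatorial one-dimensional estimate, namely verifying that a window of $p_\delta$ elements overlaps no more than $\lceil(s_\delta+p_\delta-1)/s_\delta\rceil$ consecutive intervals of length at least $s_\delta$; the floor/ceiling identity above is exactly what makes the clean ceiling expression appear rather than a looser $p_\delta/s_\delta+1$ type bound. The factorization step is routine once the element-aligned boxes are treated as respecting the tensor grid, but it is the point where the \emph{product} form of the estimate — as opposed to a weaker additive one — is genuinely used, and so it is worth stating carefully.
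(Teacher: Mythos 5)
Your proposal is correct and follows essentially the same route as the paper's proof: reduce via Remark~\ref{lem:nnz-overlap} to bounding, for a single basis function, the number of boxes its support meets; factor that count direction-by-direction using the tensor-product structure; and conclude with the univariate count $\lceil (s_\delta+p_\delta-1)/s_\delta\rceil$. The only difference is cosmetic: the paper identifies the extremal placement (the leftmost box ending at the end of the leftmost support element) to obtain $1+\left\lceil (p_\delta-1)/s_\delta\right\rceil$ directly, whereas you derive the same ceiling from the inequality $p_\delta \ge 2+(g-2)s_\delta$ together with the identity $\lfloor (p_\delta-2)/s_\delta\rfloor+1=\lceil (p_\delta-1)/s_\delta\rceil$ --- a slightly more explicit rendering of the same one-dimensional count.
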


\begin{proof}
The support of $\phif\delta_n$ contains at most $p_\delta$ elements in direction $\delta$.
The number of boxes containing $s_\delta$ elements intersecting $\supp \phif\delta_n$ is maximized if the end of the leftmost box coincides with the end of the leftmost element.
In this case the total number of boxes intersecting $\supp\phif\delta_n$ is
\[
1+\left \lceil \frac { p_\delta -1}{s_\delta} \right\rceil = \left \lceil \frac { s_\delta+p_\delta -1}{s_\delta} \right\rceil.
\]
To conclude we take the product over $\delta=1,\dots,d$ and use Remark~\ref{lem:nnz-overlap}.
\end{proof}

Second, we compute the ratio between $\#\Quad_{\PBox,\delta}$ and $N_{\PBox,\delta}$ when using an element based quadrature.

\begin{lemma}\label{lemma:maximum-smoothness-ratio}
If $\Phi$ is a tensor-product B-spline basis of order $p_1,\ldots,p_d$, $\Quad$ contains $k_1\times\ldots\times k_d$ quadrature points per element and $\PBox$ contains at most  $s_1\times\ldots\times s_d$ elements then
\[
\#\Quad_{\PBox,\delta} \le \frac{s_\delta\,k_\delta}{s_\delta+p_\delta-1}  N_{\PBox,\delta}.
\]
The estimate $\#\Quad_{\PBox,\delta}\le k_\delta N_{\PBox,\delta}$ holds independently of $s_1,\dots,s_d$.
\end{lemma}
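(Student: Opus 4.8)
The plan is to reduce the bound to a one-dimensional count in direction $\delta$ and then to exploit that the relevant ratio is monotone in the number of elements the box actually occupies. Let $t_\delta$ denote the number of B\'ezier elements contained in $\PBox$ along direction $\delta$, so that $1 \le t_\delta \le s_\delta$ by hypothesis. Since the quadrature is element based with $k_\delta$ nodes per element in this direction, the univariate point set obeys $\#\Quad_{\PBox,\delta} \le t_\delta\, k_\delta$.

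The second ingredient is a lower bound on $N_{\PBox,\delta}$, the number of univariate B-splines $\phif\delta_n$ of order $p_\delta$ that do not vanish on $\PBox$. Restricted to $\PBox$, these functions span the spline space on the $t_\delta$ consecutive knot spans, whose dimension is $p_\delta t_\delta$ minus the continuity constraints at the $t_\delta - 1$ interior breakpoints and is therefore minimized, with value $t_\delta + p_\delta - 1$, in the maximal-smoothness case. As the number of these functions is at least the dimension of their span, $N_{\PBox,\delta} \ge t_\delta + p_\delta - 1$, and combining the two estimates yields
\[
\frac{\#\Quad_{\PBox,\delta}}{N_{\PBox,\delta}} \le \frac{t_\delta\, k_\delta}{t_\delta + p_\delta - 1}.
\]

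It then remains to pass from the actual count $t_\delta$ to its upper bound $s_\delta$. I would use that $t \mapsto \frac{t}{t + p_\delta - 1} = 1 - \frac{p_\delta - 1}{t + p_\delta - 1}$ is nondecreasing for $t \ge 1$ whenever $p_\delta \ge 1$, so that replacing $t_\delta$ by $s_\delta \ge t_\delta$ only enlarges the right-hand side and gives the asserted inequality $\#\Quad_{\PBox,\delta} \le \frac{s_\delta k_\delta}{s_\delta + p_\delta - 1}\, N_{\PBox,\delta}$. The $s$-independent estimate then follows at once, either because the displayed fraction is at most $k_\delta$ (as $p_\delta \ge 1$) or directly from $\#\Quad_{\PBox,\delta} \le t_\delta k_\delta \le (t_\delta + p_\delta - 1)\,k_\delta \le k_\delta\, N_{\PBox,\delta}$. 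The one step that requires genuine care is this lower bound on $N_{\PBox,\delta}$: I phrase it through the dimension of the restricted spline space precisely so that boundary elements under open knot vectors and any repeated interior knots (reduced smoothness) cannot lower the count below $t_\delta + p_\delta - 1$.
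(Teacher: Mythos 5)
Your proposal is correct and follows essentially the same route as the paper's proof, which simply combines $\#\Quad_{\PBox,\delta}\le s_\delta k_\delta$ with $s_\delta+p_\delta-1\le N_{\PBox,\delta}$ and the bound $s_\delta/(s_\delta+p_\delta-1)\le 1$. Your additional step of working with the actual element count $t_\delta\le s_\delta$ and invoking the monotonicity of $t\mapsto t/(t+p_\delta-1)$ is a careful refinement that the paper's terse three-line argument silently elides (given the ``at most $s_1\times\ldots\times s_d$ elements'' hypothesis, the paper's direct lower bound $N_{\PBox,\delta}\ge s_\delta+p_\delta-1$ strictly speaking only holds for the actual count, so your version closes that small gap).
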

\begin{proof}
On the one hand we have
$\#\Quad_{\PBox,\delta}\le s_\delta k_\delta$ and on the other hand
$s_\delta+p_\delta-1\le N_{\PBox,\delta}$.
Finally, $s_\delta /(s_\delta+p_\delta-1)\le 1$.
\end{proof}

Lemmas~\ref{remark:macro-size} and \ref{lemma:maximum-smoothness-ratio} show two competing effects on the cost of localized sum factorization: the upper bound for $R$ increases for small boxes, while the ratio between quadrature points and basis function decreases.
The balance of these two effects is shown by the following two examples.

\begin{example}[Per-element sum factorization with Gauss quadrature]\label{example:element}
Let $\Phi=\Psi$ be a B-spline basis of order $p_1,\dots,p_d$ and $\Quad$ be the Gauss quadrature with $p_1\times\dots\times p_d$ quadrature points per element.

In this example, $\PPart$ is the collection of the elements associated to $\Phi$. The local spaces $\Phi_\PBox,\Psi_\PBox$ and the local quadrature $\Quad_\PBox$ are the restrictions of 
$\Phi$, $\Psi$ and $\Quad$ to $\PBox$ respectively.

Lemma~\ref{remark:macro-size} states $R\le p^d$.
Since $\#\Quad_{\PBox,\delta} = N_{\PBox,\delta}=p_\delta$, then \eqref{localized-N2} and \eqref{matrix-less-N2-local} yield
\[
\Cost \lesssim p^{2d+1}\, N,\quad\text{and}\quad \Cost_{app} \lesssim  p^{d+1}\, N.
\]
The bound for $\Cost$ coincides with the one in \cite{Antolin:2015} where this approach was originally proposed. 
The bound for $\Cost_{app}$ coincides with that for global sum factorization.
\end{example}

\begin{example}[Per macro-element sum factorization with Gauss quadrature]\label{example:macro-element}
Again, let $\Phi=\Psi$ be a B-spline basis of order $p_1,\dots,p_d$ and $\Quad$ be the Gauss quadrature with $p_1\times\dots\times p_d$ quadrature points per element.
In this example, each box  $\PBox\in\PPart$ contains at least $p_1\times\dots\times p_d$ elements.
The local spaces $\Phi_\PBox,\Psi_\PBox$ and the local quadrature $\Quad_\PBox$ are the restrictions of 
$\Phi$, $\Psi$ and $\Quad$ to $\PBox$ respectively.

Lemma~\ref{remark:macro-size} states $R\le 2^d$,
 Lemma~\ref{lemma:maximum-smoothness-ratio} with $s_\delta \gtrsim k_\delta=p_\delta$ gives 
 $\#\Quad_{\PBox}\eqsim p_\delta N_\delta$. Thus, \eqref{localized-N} and \eqref{matrix-less-N-local} yield
\begin{equation}\label{eq:local:costs}
\Cost \lesssim  p^{d+2} \, N,\quad\text{and}\quad \Cost_{app} \lesssim  p^{d+1}\, N,
\end{equation}
i.e., both the assembling and the application cost are, up to a $p$ independent factor, the same as for the global approach.
\end{example}

A more detailed analysis can be performed starting directly
from~\eqref{sum factorization} and \eqref{matrix-less}.
In both formulas there is a product for $\delta=1,\dots,d-1$
of $\max\{\#\Quad_\delta, q_\delta N_\delta+p_\delta M_\delta\}$
and $\max\{\#\Quad_\delta, M_\delta\}$ respectively.
The factor corresponding to $\delta=d$ is always $\#\Quad_d$.

This means that we can exploit a factor of $k_d/p_d$ 
from Lemma~\ref{lemma:maximum-smoothness-ratio} to compensate for $R$ as
long as the boxes are small \emph{only} in direction $d$.
Since the order of directions in the assembling process is arbitrary,
we can choose it in our favor.
This allows us to consider partitions into \emph{narrow macro-elements}, i.e., into boxes containing one element in one direction and $p_\delta$ elements in the other directions. 
A bound on the repetition ratio $R$ follows from the Lemma below.

\begin{lemma}\label{cor:overlap-narrow}
	Let $\Phi=\Psi$ be a tensor-product B-spline basis of order $p_1,\ldots,p_d$ with $d\ge2$
	and assume that each $\PBox\in\PPart$ of size $s_1\times\dots\times s_d$ elements satisfies
	$\#\{ \delta : s_\delta<p_\delta\}\le 1$. Then,
	we have $R \le  2^{d-1} d p$.
\end{lemma}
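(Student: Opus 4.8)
\section*{Proof proposal for Lemma~\ref{cor:overlap-narrow}}

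The plan is to reduce the statement to a counting problem for a single basis function and then to control that count by a disjointness argument rather than a product over directions. By Remark~\ref{lem:nnz-overlap} and $\Phi=\Psi$, it suffices to bound $\max_{\phi\in\Phi}\#\{\PBox\in\PPart:\phi|_\PBox\neq0\}$, that is, the number of boxes met by the support of a single tensor-product B-spline $\phi$, whose univariate factor in direction $\delta$ I denote $\phif\delta$. Since $\supp\phif\delta$ spans at most $p_\delta$ elements, $\supp\phi$ occupies in each direction $\delta$ a coordinate interval $[\alpha_\delta,\beta_\delta]$ that is a union of at most $p_\delta$ elements. Because the boxes are element-aligned, a box meets $\supp\phi$ exactly when it shares a full element with $\supp\phi$ in every direction.

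First I would assign to every box $\PBox$ meeting $\supp\phi$ a single direction $\delta_0(\PBox)\in\{1,\dots,d\}$: its unique narrow direction if one exists (there is at most one by hypothesis), and otherwise a fixed direction, say $\delta_0=d$. By construction, every box in the class $\{\PBox:\delta_0(\PBox)=\delta_0\}$ is \emph{wide} in all directions $\delta\neq\delta_0$, so its extent in such a direction is at least that of $\supp\phi$. The elementary interval observation used in Lemma~\ref{remark:macro-size} then shows that in each wide direction $\delta$ the box must contain the left endpoint $\alpha_\delta$ or the right endpoint $\beta_\delta$ of $\supp\phi$. This lets me record a sign vector $\epsilon\in\{L,R\}^{\{1,\dots,d\}\setminus\{\delta_0\}}$, with $\epsilon_\delta=L$ if the box contains $\alpha_\delta$ and $\epsilon_\delta=R$ otherwise (so that it then contains $\beta_\delta$); there are $2^{d-1}$ possible values of $\epsilon$.

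The key step is to bound, for a fixed class $\delta_0$ and a fixed sign vector $\epsilon$, the number of boxes. All such boxes contain a common point in the wide directions, namely the point whose $\delta$-coordinate is $\alpha_\delta$ or $\beta_\delta$ according to $\epsilon_\delta$. Hence, if two of them overlapped in direction $\delta_0$, they would share an interior point, contradicting the pairwise disjointness of the boxes in $\PPart$. Therefore their projections onto the $\delta_0$-axis are pairwise disjoint, element-aligned intervals, each sharing an element of $\supp\phi$ in direction $\delta_0$; since $\supp\phi$ spans at most $p_{\delta_0}$ elements there, and disjointness forces each such element into at most one box, there are at most $p_{\delta_0}$ boxes per $\epsilon$. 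Multiplying by the $2^{d-1}$ choices of $\epsilon$ and summing over the $d$ classes gives $\#\{\PBox:\phi|_\PBox\neq0\}\le\sum_{\delta_0=1}^d 2^{d-1}p_{\delta_0}\le 2^{d-1}dp$, which is the claim.

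The main obstacle is that $\PPart$ need not be a regular grid: the boxes may be laid out in a staggered, brick-like pattern, so the naive bound ``at most two boxes per wide direction times at most $p_{\delta_0}$ in the narrow direction'' cannot be justified as an independent product over directions. The disjointness argument above is exactly what replaces such a product: fixing the behaviour in the wide directions through $\epsilon$ collapses the remaining count to a one-dimensional, pairwise-disjoint family, which is then controlled by the at most $p_{\delta_0}$ support elements. The one point I would check carefully is that folding the fully-wide boxes into a single class (here $\delta_0=d$) does not break the disjoint-projection argument; it does not, since those boxes are wide in all directions $\neq d$, as the argument requires.
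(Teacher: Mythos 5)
Your proof is correct, but it takes a genuinely different route from the paper. The paper deduces the lemma from a separate combinatorial statement, Proposition~\ref{lemma:narrow-macro-elements}: it normalizes the situation to coverings of the window $\bigtimes_{\delta=1}^d[0,p_\delta[$ by disjoint boxes of \emph{exact} sizes (edge length $p_\delta$ in every direction except possibly one, where it is $1$), and proves the count by induction on $d$, splitting the covering into boxes touching the left and right slabs (handled by the induction hypothesis after projecting out the first coordinate) and central boxes, which are necessarily narrow in direction $1$ and are counted, at most $2^{d-1}$ per unit slab, via Lemma~\ref{remark:macro-size}. You instead work directly with the boxes meeting $\supp\phi$: you classify them by a designated narrow direction $\delta_0$ and a sign vector $\epsilon\in\{L,R\}^{d-1}$ recording which extreme support element each box captures in its wide directions, and then use pairwise disjointness of the boxes in $\PPart$ to show that, within a fixed class $(\delta_0,\epsilon)$, the projections onto the $\delta_0$-axis are element-disjoint, so at most $p_{\delta_0}$ boxes occur per class; your interval observation (a box wide in direction $\delta$ that meets the support must contain its leftmost or rightmost support element) is sound, and the folding of fully-wide boxes into the class $\delta_0=d$ is harmless, as you note. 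What each approach buys: your argument avoids induction and applies verbatim to arbitrary element-aligned boxes satisfying the hypothesis, including narrow widths $1<s_{\delta_0}<p_{\delta_0}$ and wide widths exceeding $p_\delta$, a generality the paper handles only implicitly through its normalization to exact sizes; the paper's proposition, on the other hand, yields the sharper count $2^d+2^{d-1}\sum_{\delta}(p_\delta-2)_+$ (shown to be attained), whereas your classification gives the slightly cruder $2^{d-1}\sum_{\delta}p_{\delta}$. Both bounds imply $R\le 2^{d-1}dp$, so your proof fully establishes the lemma as stated.
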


The Lemma is a direct consequence of Remark~\ref{lem:nnz-overlap} and the following proposition.

\begin{proposition}\label{lemma:narrow-macro-elements}
For all and coverings $\PPart$ of $\bigtimes_{\delta=1}^d[0,p_\delta[$ with disjoint boxes intersecting it and such that for all $\PBox\in\PPart$
\begin{align}
&\PBox=[a_1,b_1[\times\dots\times [a_d,b_d[, \qquad a_i,b_i\in \mathbb Z,\\
&b_\delta-a_\delta= \begin{cases} p_\delta & \delta \ne j_\PBox\\ 1 & \delta = j_\PBox \end{cases}
\end{align}
for some $j_\PBox \in\{ 1,\ldots,d\}$ that depends on $\PBox$, the following inequality holds
$$
\#\PPart \le 2^d+    2^{d-1}\sum_{\delta=1}^d (p_\delta-2)_+,
$$
where $x_+=\max\{x,0\}$.
\end{proposition}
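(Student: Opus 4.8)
The plan is to prove the bound by exhibiting an injection from $\PPart$ into a suitable set of unit cells, so that $\#\PPart$ is controlled by a pure cell count. Subdivide $Q:=\bigtimes_{\delta=1}^d[0,p_\delta)$ into the unit cells $C_{\mathbf k}:=\bigtimes_{\delta=1}^d[k_\delta,k_\delta+1)$ indexed by $\mathbf k=(k_1,\dots,k_d)$ with $k_\delta\in\{0,\dots,p_\delta-1\}$. Call a coordinate $k_\delta$ \emph{boundary} if $k_\delta\in\{0,p_\delta-1\}$ and \emph{interior} otherwise. The first step is the bookkeeping observation that the right-hand side equals (when every $p_\delta\ge2$, and dominates in general) the number of cells having at most one interior coordinate: the $2^d$ counts the cells all of whose coordinates are boundary, while for each $\delta$ the term $2^{d-1}(p_\delta-2)_+$ counts the cells whose $\delta$-th coordinate is one of the $(p_\delta-2)_+$ interior values and whose remaining $d-1$ coordinates are boundary. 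Thus it suffices to inject $\PPart$ into the family of cells with at most one interior coordinate.

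Second, I would construct the injection $\PBox\mapsto C_{\mathbf c(\PBox)}$. Fix $\PBox\in\PPart$ with narrow direction $j=j_\PBox$, so $b_j-a_j=1$ and $b_\delta-a_\delta=p_\delta$ for $\delta\ne j$. In the narrow direction set $c_j:=a_j\in\{0,\dots,p_j-1\}$; this is the only coordinate allowed to be interior. In each full direction $\delta\ne j$ the key geometric claim is that the width-$p_\delta$ integer interval $[a_\delta,b_\delta)$, which meets $[0,p_\delta)$ because $\PBox$ intersects $Q$, necessarily contains one of the two boundary unit cells $[0,1)$ or $[p_\delta-1,p_\delta)$. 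Choosing such a boundary cell index defines $c_\delta\in\{0,p_\delta-1\}$. By construction $\mathbf c(\PBox)$ has all coordinates boundary except possibly the $j$-th, hence at most one interior coordinate, and $C_{\mathbf c(\PBox)}\subseteq\PBox$.

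Third, injectivity is immediate from disjointness: if $\mathbf c(\PBox)=\mathbf c(\PBox')$, then the common cell lies in $\PBox\cap\PBox'$, forcing $\PBox=\PBox'$. Combining the injection with the cell count of the first step yields the stated inequality. Note that only disjointness and the shape hypotheses are used, not the covering property, so the bound in fact holds for any disjoint family of such boxes meeting $Q$.

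The main obstacle is the geometric positioning claim of the second step, but it reduces to a short case distinction. Since $a_\delta\in\mathbb Z$ and $[a_\delta,a_\delta+p_\delta)\cap[0,p_\delta)\ne\emptyset$ forces $-p_\delta<a_\delta<p_\delta$: if $a_\delta\le0$ then $[0,1)\subseteq[a_\delta,a_\delta+p_\delta)$, while if $a_\delta\ge1$ then $a_\delta+p_\delta>p_\delta$ and $[p_\delta-1,p_\delta)\subseteq[a_\delta,a_\delta+p_\delta)$. It is essential here that the width equals the full $p_\delta$; this is precisely what fails for a narrow direction, and it is why each box may contribute one interior coordinate but no more. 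The remaining care is purely combinatorial: verifying the cell-count identity of the first step, including the degenerate cases $p_\delta\le2$, where $(p_\delta-2)_+$ vanishes and the set $\{0,p_\delta-1\}$ may collapse to a single point, so that the cell count only decreases and the inequality is preserved.
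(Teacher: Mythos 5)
Your proof is correct, and it takes a genuinely different route from the paper. The paper proves Proposition~\ref{lemma:narrow-macro-elements} by induction on the dimension: it splits $\PPart$ into the boxes meeting the slab $[0,1[\times\bigtimes_{\delta=2}^d[0,p_\delta[$, those meeting $[p_1-1,p_1[\times\bigtimes_{\delta=2}^d[0,p_\delta[$, and the central boxes; the first two families are projected to $\mathbb R^{d-1}$ and bounded by the induction hypothesis (contributing the $2^d+2^{d-1}\sum_{\delta\ge2}(p_\delta-2)_+$ part), while the central boxes are necessarily narrow in direction~$1$, lie in unit slabs $[i,i+1[\times\mathbb R^{d-1}$ for $i=1,\dots,p_1-2$, and are bounded by $2^{d-1}$ per slab via Lemma~\ref{remark:macro-size}. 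You instead build a direct, non-inductive injection: each box receives a unit cell it contains, chosen boundary-extremal in every full direction (your integer case distinction $a_\delta\le0$ versus $a_\delta\ge1$ is exactly right and is where the full width $p_\delta$ is used) and equal to the box's extent in the narrow direction, so the image cell has at most one interior coordinate; disjointness gives injectivity, and the count of such cells is exactly the right-hand side when all $p_\delta\ge2$ and smaller in the degenerate cases, which you handle correctly. Your approach buys three things: it is self-contained (no induction, no appeal to Lemma~\ref{remark:macro-size}), it makes the combinatorial meaning of the bound transparent by identifying the right-hand side as a cell count, and it shows the covering hypothesis is superfluous---pairwise disjointness of boxes meeting $\bigtimes_{\delta=1}^d[0,p_\delta[$ suffices, a mild strengthening of the statement. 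What the paper's inductive slab decomposition buys in exchange is the structure exploited immediately afterwards in its sharpness remark, where equality is propagated through the three estimates of the induction; if you wanted sharpness from your argument you would have to exhibit configurations making your injection a bijection, which you do not address (and were not asked to).
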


\begin{proof}
We prove the bound by induction on the dimension $d$. For $d=1$ any partition of $[0,p_{1}]$ in segments of length $1$ has exactly $p_{1}$ elements, which yields the upper bound
$2^1+2^0 (p_1-2)_+$. Now consider $d>1$.
For any partition $\PPart$, define 
\begin{align*}
	\PPart_L&=\{\PBox\in\PPart: \PBox\cap[0,1[\times\bigtimes_{\delta=2}^d[0,p_\delta[ \ne \emptyset \},\\
	\PPart_R&=\{\PBox\in\PPart: \PBox\cap[p_1-1,p_1[\times\bigtimes_{\delta=2}^d[0,p_\delta[ \ne \emptyset \},\\
	\PPart_C&=\PPart\setminus(\PPart_L\cup\PPart_R).
\end{align*}
We have
\begin{equation}\label{eq:proof-narrow-1}
	\#\PPart  \le \# \PPart_L +\#\PPart_R + \# \PPart_C.
\end{equation}
The projection $(x_1,\dots,x_d)\to (x_2,\dots,x_d)$ maps the boxes in $\PPart_L$ to disjoint boxes in $\mathbb R^{d-1}$  that cover $\bigtimes_{\delta=2}^d[0,p_\delta[$. The induction hypothesis and the same argument for $\PPart_R$ yield
\begin{equation}\label{eq:proof-narrow-2}
	\# \PPart_L +\#\PPart_R\le 2 \Big(2^{d-1}+2^{d-2} \sum_{\delta=2}^d(p_\delta-2)_+\Big)\le 2^d+2^{d-1} \sum_{\delta=2}^d(p_\delta-2)_+.
\end{equation}
A box $\PBox\in \PPart_C$ can not have length $p_1$ in the first direction. Thus, $\PBox$ has
size $1\times p_2\times \cdots\times p_d$ and is contained in $[i,i+1[\times\mathbb R^{d-1}$ for some $i\in\{1,\dots,p_1-2\}$.
Lemma~\ref{remark:macro-size} states that there are at most $2^{d-1}$ such boxes for each $i=1,\dots,p_1-2$. Thus we have
\begin{equation}\label{eq:proof-narrow-3}
	\# \PPart_C \le 2^{d-1} (p_1-2)_+.
\end{equation}
The combination of \eqref{eq:proof-narrow-1}, \eqref{eq:proof-narrow-2} and \eqref{eq:proof-narrow-3} yields the desired result.
\end{proof}

\begin{remark}
	Assuming $p_1,\ldots,p_d \ge 2$, the bound in Proposition~\ref{lemma:narrow-macro-elements} is sharp.
	Indeed, for $d=1$ it is sharp. Moreover, in this case \eqref{eq:proof-narrow-1} holds with equality. Equality in \eqref{eq:proof-narrow-3} can be realized and an induction argument shows that the same applies to \eqref{eq:proof-narrow-2}.
\end{remark}

\begin{example}[Per narrow-macro-element sum factorization]\label{example:narrow}
	Let $\Phi=\Psi$ be a B-spline basis of order $p,\dots,p$ and $\Quad$ be a quadrature with $k\times\dots\times k$ points per element with $k\le p$.
	
	In this example, each box  $\PBox\in\PPart$ of size $s_1\times \dots \times s_d$ elements satisfies $\#\{\delta: s_\delta< p\}\le 1$.
	The local spaces $\Phi_\PBox,\Psi_\PBox$ and the local quadrature $\Quad_\PBox$ are the restrictions of 
	$\Phi$, $\Psi$ and $\Quad$ to $\PBox$ respectively.
	
	For simplicity, consider first $s_d=1$. Then, $\#\Quad_{\PBox,d} \le k/p N_{\PBox,d}$, while 
	for $\delta<d$ we have from Lemma~\ref{lemma:maximum-smoothness-ratio}
	that $\#\Quad_{\PBox,\delta} \le k N_{\PBox,\delta}$.
	Inserting these bounds into \eqref{sum factorization} and \eqref{matrix-less}
	gives the following bounds on the costs
	\begin{equation}\label{eq:example:narrow}
		\Cost_{\PBox}\lesssim k\, p^d\, N_{\PBox},\quad\text{and}\quad \Cost_{app,\PBox}\lesssim k^{d}\, N_{\PBox}.
	\end{equation}
	Since \eqref{sum factorization} and \eqref{matrix-less} are linear in $\#\Quad_d$ the bound extends to $1<s_d<p$.
	If $s_\delta<p$ for some $\delta\ne d$, then the same bound is achieved by reordering the directions.
	Lemma~\ref{cor:overlap-narrow} yields $R\le 2^{d-1}\,d\,p$. Thus, we
	obtain
	\[
		\Cost\lesssim k\, p^{d+1}\, N,\quad\text{and}\quad \Cost_{app}\lesssim k^{d}\,p\, N.
	\]
	If $k\eqsim p$, as for Gauss quadrature, we have 
	\[
		\Cost\lesssim p^{d+2}\, N,\quad\text{and}\quad \Cost_{app}\lesssim p^{d+1}\, N.
	\]
	If $k\eqsim 1$, as for weighted quadrature, we have
	\[
		\Cost\lesssim p^{d+1}\, N,\quad\text{and}\quad \Cost_{app}\lesssim p\, N.
	\]	
	Concluding, the cost is, up to a degree independent factor, the same as for global assembling.
\end{example}

\subsection{Localized sum factorization for some non-tensor-product bases}\label{sec:localized2}

In this section we show that hierarchical B-splines and multipatch domains 
fit into the localized sum factorization framework presented in this paper.
Other generating systems without global tensor-product structure (like
hierarchical LR splines \cite{Bressan:2015}) could be analyzed
in similar ways. The same holds if adaptivity and multipatch discretizations
are combined.

\begin{example}[HB-splines]\label{example:hb}
	\def\hbasis#1{\mathfrak B_{#1}}
	\def\hsubdomain#1{ \Omega_{#1}}
	\def\ring#1{ \Delta_{#1}}
	
	\def\numSub{L}
	\def\domain{\Omega}
	\def\thus{\Rightarrow}
	\def\hbb{\mathfrak H}
	\def\hbl{\mathfrak L}
	\def\hbas{\gamma}
	
	The Hierarchical B-spline basis (HB) is a basis that breaks the global tensor-product structure and allows for adaptive methods in which only a part of the domain is refined, see~\cite{Buffa:2016}.
	The basis is obtained by \emph{selecting} functions from different tensor-product B-spline bases on different regions of the domain.
	
	Let $\hbasis 1, \ldots, \hbasis L$ be tensor-product B-spline bases of the same
	degree $p$ defined on the domain $\domain$ that generate nested spaces, i.e.,
	\begin{equation}
	  \label{eq:hierarchy}
	  \SPAN\hbasis 1 \subset \cdots \subset \SPAN \hbasis L
	\end{equation}
	and let
	\[
	\domain=:\hsubdomain 1 \supseteq\dots\supseteq\hsubdomain \numSub \supseteq\Omega_{L+1}:=\emptyset
	\]
	be corresponding closed domains.
	For simplicity, we further assume that the $\hbasis \ell$ are a sequence of dyadically
	refined bases and that each of the domains $\hsubdomain \ell$ is a union of elements of
	the B\'ezier mesh of $\hbasis \ell$.
	The \emph{hierarchical basis} (HB-splines) is defined by
	Kraft's \emph{selection} criteria \cite{Kraft:1997}:
	\def\sellev{\mathfrak{S}}
	\begin{equation}
	  \Phi := \bigcup_{\ell=1}^\numSub \sellev_\ell
	  \quad \mbox{with}\quad
	  \sellev_\ell:= \left\{\begin{aligned}\phi \in \hbasis \ell:\,&
	  \supp \phi \subseteq \hsubdomain \ell,\\
	  &\supp \phi \not\subseteq \hsubdomain {\ell+1}\end{aligned}\right\} .
	\end{equation}
	We assume that $\Phi$ is a $\beta$-admissible HB basis, i.e.,
	$\max\hbl(\textbf x) - \min\hbl(\textbf x)+1\le \beta$ for all $\textbf x\in\domain$,
	where
	\[
		\hbl(\textbf x) := \Big\{\ell \ :\ \textbf x \in
		\bigcup_{\phi\in\sellev_\ell} \supp \phi \Big\}
	\]
	is the set of active levels at a point $\textbf x$.
	Moreover, we assume that the partition of $\PPart$ has the form 
	\[
		\PPart=\bigcup_{\ell=0}^L\PPart_\ell	,
	\] 
	where each $\PPart_\ell$ is a partitioning of the \emph{ring} $\ring {\ell}:= \hsubdomain{\ell}\setminus \hsubdomain{\ell+1}$ into macro-elements $\PBox$ as in Example~\ref{example:narrow}, where the size is measured in elements of the B\'ezier mesh of $\hbasis \ell$.
		For this example we consider local bases
		\[
			\Phi_{\PBox,\ell}=\{ \phi \in \hbasis {\ell} \,:\, \supp\phi\cap\PBox \ne \emptyset  \},\qquad \ell \in \hbl(\PBox).
		\]
		Observe that these are tensor product bases, and that their union contains the restriction of $\Phi$ to $\PBox$, so that an analogous of~\eqref{eq:local-subset} holds.
		Analogously to \eqref{eq:loc-basis-change}, we have
		\[
			\Ah=\sum_{\PBox\in\PPart}\;\;\sum_{\ell,\gamma \in\hbl(\PBox)} S_{\PBox,\ell,\gamma}^\top \Ah_{\PBox,\ell,\gamma}S_{\PBox,\ell,\gamma},
		\]
		where $\Ah_{\PBox,\ell,\gamma}$ is the matrix corresponding to the bases $\Phi_{\PBox,\ell}$ and $\Phi_{\PBox,\gamma}$, and the matrices $S_{\PBox,\ell,\gamma}$ are selection matrices.
		Observe that at most $(\#\hbl(\PBox) )^2=\beta^2$ local matrices are assembled
		for each $\PBox$.
		Using $\# \Phi_{\PBox,\ell+1} \ge \# \Phi_{\PBox,\ell}$ and \eqref{eq:example:narrow}, we deduce that 
		\begin{equation}\label{eq:local-cost:hb}
			\Cost_{\PBox}\lesssim k\, p^d\, \beta^2 \# \Phi_{\PBox,\ell},\qquad \Cost_{\PBox,app}\lesssim k^d\, \beta^2 \# \Phi_{\PBox,\ell}
		\end{equation}
	for all $\PBox\in\PPart_\ell$.
	Since
	$\PPart_\ell$ is a partition of $\ring {\ell}$, Lemma~\ref{cor:overlap-narrow} gives
	\begin{equation}\label{eq:R:hb}
		\sum_{\PBox\in\PPart_\ell} \# \Phi_{\PBox,\ell} 
					\le 2^{d-1} d p\  \#\{ \phi \in \hbasis {\ell} \;:\; \supp \phi \cap \ring \ell \ne
					\emptyset \}.
	\end{equation}
	$\beta$-admissibility implies that for $\phi \in \hbasis {\ell}$ we have
	\begin{equation}\nonumber
		\supp \phi \cap \ring \ell \ne
					\emptyset\quad  \Rightarrow\quad \left\{\begin{aligned}&\supp\phi\subseteq \Omega_{\ell-\beta+1},\\
						&\supp\phi\not\subseteq \Omega_{\ell+1}.
					\end{aligned}\right .
	\end{equation}
	Consequently,
	\begin{equation}\label{eq:hb-proof-1}
		\{ \phi \in \hbasis {\ell} \;:\; \supp \phi \cap \ring \ell \ne
					\emptyset \}\subseteq \bigcup_{i=0}^{\beta-1}  \left\{ \begin{aligned}\phi \in \hbasis {\ell} \,:\ & \supp\phi\subseteq\Omega_{\ell-i},\\
					&\supp \phi \not\subseteq \Omega_{\ell-i+1} \end{aligned}\right\}.
	\end{equation}
	Since the bases $\hbasis \ell$ are obtained by dyadic refinement, we have
	\begin{equation}\label{eq:hb-proof-2}
		\#\left\{ \begin{aligned}\phi \in \hbasis {\ell} \,:\ & \supp\phi\subseteq\Omega_{\ell-i},\\
		&\supp \phi \not\subseteq \Omega_{\ell-i+1} \end{aligned}\right\}\le 2^{d i}
		\#\sellev_{\ell-i}.
	\end{equation}
	Equations \eqref{eq:local-cost:hb}, \eqref{eq:R:hb}, \eqref{eq:hb-proof-1} and \eqref{eq:hb-proof-2} yield the overall cost bound
	\[
		\Cost\lesssim k\, p^{d+1}\, 2^{d\beta}\, \beta^2\, N,\qquad \Cost_{app}\lesssim k^d\, p\, 2^{d\beta}\, \beta^2\, N,
	\]
	where $k$ is the number of quadrature points per element and direction.
	We have shown that, compared to Example~\ref{example:narrow}, the costs increase at worst by a $p$-independent factor $2^{d\beta}\, \beta^2$. 
	The extension to bilinear forms involving derivatives is completely straight forward.
	\end{example}
	
	\begin{remark}
	There is another basis of the space of hierarchical B-splines: the Truncated  Hierarchical B-spline
	(THB) basis, cf.~\cite{Giannelli:2012}. Localized sum factorization cannot be directly applied
	to the truncated basis because it does not fulfill \eqref{eq:prelim-tp}.
	Nevertheless, it is possible to express the matrix $\Ah_{\mathrm{THB}}$ corresponding
	to the truncated basis as
	\[
	\Ah_{\mathrm{THB}}= T_\Psi ^\top\ \Ah_{\mathrm{HB}}\ T_\Phi,
	\]
	where $\Ah_{\mathrm{HB}}$ is the system matrix for the hierarchical basis and $T_\Psi $, $T_\Phi$ are the matrices corresponding to change of bases. The number of non zero entries in each column of $T_\Psi$, $T_\Phi$ is bounded by $(p+1)^d 2^{\beta d}$ for $\beta$-admissible dyadically refined hierarchical B-splines.
	Consequently, the cost $\Cost_{T}$ of computing the product by $T_\Psi$ and  $T_\Phi$ is bounded as follows 
	\[
		\Cost_{T}\lesssim  p^{2 d}\, 2^{\beta d}\, (N + M),
	\]
	where we use $\nnz(\Ah_{\mathrm{THB}})\le \nnz(\Ah_{\mathrm{HB}}) \eqsim  p^d (N+M)$.
	Interestingly, for 2D domains $\Cost_{T}$ has the same order in $p$ as assembling $\Ah_{\mathrm{HB}}$. This shows that using the truncated hierarchical B-spline basis is, up to a $p$-independent factor, equivalent to the standard hierarchical B-spline basis for 2D domains.
	This does not generalize to higher dimensional domains.
\end{remark}
	
\begin{example}[Multipatch domains]\label{example:multipatch}

In many practical problems, the computational domain $\Omega$ is not
diffeomorphic to a $d$-dimensional cube. In such cases, the domain
$\Omega$ is typically partitioned into
sub-domains $\Omega_\ell$, $\ell=1,\dots,L$, each parametrized by a map 
$\textbf G_\ell$ defined on $[0,1]^d$ as in
\[
\Omega =\bigcup_{\ell=1}^L \Omega_\ell =\bigcup_{\ell=1}^L \,\textbf G_\ell([0,1]^d).
\]
On each of these sub-domains, independent bases $\Phi_\ell$ are defined.
Assuming that we want to solve a second-order PDE, we are typically interested in a
$H^1$-conforming discretization which means that the basis functions have to be continuous
at the interfaces between the patches. This is typically enforced by identifying the
basis functions that coincide on the the interface.

We want to explain the application of sum factorization in this context using the
abstract framework introduced in the beginning of this section. 
We combine the functions $\textbf G_\ell$ to one function
defined on $\widehat \Omega := \{1,\ldots, L \} \times [0,1]^d$ as follows
\[
	\textbf G: \widehat \Omega\rightarrow \Omega, \qquad \textbf G (\ell,\textbf x) := \textbf G_\ell(\textbf x).
\]
The sum factorization sub-domains coincide with the parametric patches:
\[
	\PPart=\big\{\{\ell\}\times [0,1]^d: \ell= 1,\dots,L \big\}.
\]
On each patch, we assume a tensor-product basis $\widehat\Phi_{\ell}$ to be given.
Since we want a continuous discretization space, we define the global function space
$\widehat\Phi$ as the union of the local bases $\widehat\Phi_1,\ldots,\widehat\Phi_L$, where we
(repeatedly) identify functions $\phi\in\widehat\Phi_\ell$ and
$\varphi\in \widehat\Phi_\gamma$ if 
\[
	\left\{ \begin{aligned}
		& \Gamma_{\ell,\gamma}:=\Omega_\ell\cap\Omega_\gamma \ne \emptyset \\
		& \phi(\textbf G_{\ell}^{-1} \textbf x)|_{\Gamma_{\ell,\gamma}}
			=\varphi(\textbf G_{\gamma}^{-1} \textbf x)|_{\Gamma_{\ell,\gamma}} \ne 0 .
	\end{aligned}\right .
\]

For each sub-domain $\Omega_\ell$, the PDE is pulled back to the parameter domain $\{\ell\}\times [0,1]^d$
analogous to Example~\ref{example:second-order}. On each of these domains, assembling can be
performed independently.
The cost of assembling each $\Ah_{\PBox}$ is described by~\eqref{localized-N}, \eqref{localized-N2}
and~\eqref{localized-Q}, that of applying $\Ah_{\PBox}$ by~\eqref{matrix-less-N-local}, \eqref{matrix-less-N2-local} and~\eqref{matrix-less-Q-local}.

In all cases of practical interest, the repetition ratio $R$ is small. If the 
local bases are tensor-product B-spline bases defined over open knot vectors
such that there are $N_{\PBox,\delta}\ge 3$ basis functions
in each direction $\delta=1,\ldots, d$, we easily observe that
$\prod_{\delta=1}^d (N_{\PBox,\delta}-2)$ basis functions vanish on $\partial \PBox$.
Thus, we obtain
\[
	R = \frac{\sum_{\PBox\in \PPart} N_{\PBox} }{N}
	  \le \frac{\sum_{\PBox\in \PPart} \prod_{\delta=1}^d N_{\PBox,\delta}}
				{\sum_{\PBox\in \PPart} \prod_{\delta=1}^d (N_{\PBox,\delta}-2)}
	  \le 3^d \lesssim1.
\]
The extension to bilinear forms involving derivatives is completely straight forward.
\end{example}

\begin{remark}
	In real-world applications, the domains obtained by the parameterization
	are often additionally trimmed.
	Extending the proposed techniques to trimmed domains would be a challenge.
	A possible approach would be to compute the matrix using sum factorization and 
	ignoring trimming at first.
	Then to recompute the matrix elements corresponding to the basis
	functions whose support is trimmed using a different technique.
	In reasonable cases, one expects that the number of basis functions with trimmed support is small compared to the total number of basis functions.
	Integration over trimmed elements is still an active research topic, cf. \cite{trimmed-quad,untrimming}.
\end{remark}

\section{An algebraic description}\label{sec:5}

The unifying idea of sum factorization is that the map from the value of the
coefficients on the quadrature points to the system matrix is linear and can be represented as a tensor;
the overall algorithm can be seen as a tensor contraction. Since most people are more familiar
with Kronecker products, we explain the algorithm using them.
The two descriptions correspond to one another by reinterpreting a order-$d$ tensor as the diagonal of a matrix.

Let $\Quad$ be lexicographically ordered and define
\[
	Q_{\Phi}=[\phi_n(\textbf x)]_{n=1,\dots,N}^{\textbf x\in\Quad}, \qquad Q_{\Psi}=[\omega(\textbf x)\psi_m(\textbf x)]_{m=1,\dots,M}^{\textbf x\in\Quad}
\]
and $F$ to be the diagonal matrix containing $\mathcal F(\textbf x)$ for $\textbf x \in\Quad$.
Then
\begin{equation}\nonumber
\Ah= (Q_{\Psi})^\top F\, Q_{\Phi}.
\end{equation}
The factorization can be now expressed by decomposing $Q_{\Phi}$ and $Q_\Psi$ as Kronecker products:
\[
	\begin{aligned}
	&Q_\Phi = \Kprod_{\delta=1}^d \Pfact{Q_\Phi}\delta,\qquad \Pfact{Q_\Phi}\delta=[\phif\delta_n(x_\delta)]_{n=1,\dots,N_\delta}^{x_\delta\in\Quad_\delta}\\
	&Q_\Psi = \Kprod_{\delta=1}^d \Pfact{Q_\Psi}\delta,\qquad \Pfact{Q_\Psi}\delta=	[\psif\delta_m(x_\delta)\omega_\delta(x_\delta)]_{m=1,\dots,M_\delta}^	{x_\delta\in\Quad_\delta}.
	\end{aligned}
\]
Since for all matrices $C$ and $D$, the Kronecker product satisfies
\[
	C\Kprods D =(C\Kprods I)(I\Kprods D),
\]
we have
\begin{equation}\nonumber
\Ah= (I\Kprods\dots \Kprods \Pfact{Q_\Psi}d)^\top  \dots(\Pfact{Q_\Psi}1 \Kprods\dots\Kprods I )^\top F(\Pfact{Q_\Phi}1 \Kprods\dots\Kprods I )  \dots (I\Kprods\dots\Kprods  \Pfact{Q_\Phi}d).
\end{equation}
Note that
\[
	(\Pfact{Q_\Psi}1 \Kprods\dots\Kprods I )^\top F(\Pfact{Q_\Phi}1 \Kprods\dots\Kprods I )=\begin{pmatrix}\ddots& &\\& \Pfact\Ah1_{x_2,\dots,x_d} &\\ & &\ddots \end{pmatrix}_{x_i\in\Quad_i},
\]
where the right hand side is the block diagonal matrix with blocks $\Pfact\Ah1_{x_2,\dots,x_d}$ for $x_i\in\Quad_i$, $i=2,\dots,d$. The blocks are lexicographically ordered along the diagonal.
More generally, after $\delta$ multiplications, we obtain
\[
	(I\Kprods\dots\Pfact{Q_\Psi}\delta \dots\Kprods I )^\top \dots F \dots (I\Kprods\dots\Pfact{Q_\Phi}\delta \dots\Kprods I )=\begin{pmatrix}\ddots& &\\& \Pprod\Ah\delta_{x_{\delta+1},\dots,x_d} &\\ & &\ddots \end{pmatrix}_{x_i\in\Quad_i}.
\]
The improved performance can then be easily explained by the cost of a matrix-matrix multiplication
$C\;D$, which is bounded by $\nnz (C)\; \mbox{max-nnz-per-row}(D)$.
These correspond to the factors in \eqref{sum factorization-nnz}.

\section{Pre-assembling costs}\label{sec:6}

The overall costs presented in the previous sections only covers the costs of assembling $\Ah$ or computing $\textbf v:=\Ah\textbf u$.
The computational costs of computing $\Phi_\delta(\Quad_\delta)$, $\Psi_\delta(\Quad_\delta)$,  $\mathcal F(\Quad)$ and $u(\Quad)$ have been ignored, as common in the literature. However it is worth to have a closer look onto the corresponding computational costs.
Our aim is to show algorithms for which the computation of  $\Phi_\delta$, $\Psi_\delta$, $\mathcal F$ be performed at a cost that is inferior to the cost of sum factorization.
For simplicity, we focus on the global sum factorization approach, but the results hold also for the localized
version as they apply to each box separately.

\subsection{Evaluation of the functions in the generating sets \texorpdfstring{$\Phi_\delta$}{Phi} and \texorpdfstring{$\Psi_\delta$}{Psi}}\label{sec:6:1}

The evaluation of a B-spline function $\phi_n$ or $\psi_m$ cannot be done in constant time. As we
have a tensor-product structure, an efficient approach is to pre-compute the function values for the
corresponding univariate functions $\phif\delta_n$ or $\psif\delta_m$.

For a standard B-spline basis $\Phi$, this yields a cost $\Cost$ which is bounded as follows
\begin{equation}\label{eq:costs-basis}
	\Cost
		\lesssim  \sum_{\delta=1}^d p_\delta^2 \# \Quad_\delta.
\end{equation}
The above cost is negligible in all reasonable situations if $d\ge2$.

\subsection{Evaluation of \texorpdfstring{$u\in\SPAN \Phi$}{u in span Phi}}\label{sec:6:3}

First we remember that
$$
u(x) = \sum_{n=1}^N \textbf{u}_n \phi_n(x).
$$

For the evaluation of $u$ at the quadrature points $\Quad$, we can again exploit the tensor-product structure of $\Phi$ and $\Quad$. This yields the following algorithm:

\begin{algorithm}[H]
	\caption{Recursive function computation}
	\label{alg-compute}
	\begin{algorithmic}[1] 
	\Procedure{Eval}{$[ \Quad_\delta]_{\delta=1}^d, [\Phi_\delta]_{\delta=1}^d,  [\textbf{u}_n]_{n=1}^N$}
	\If {d=0}
		\State \textbf{return} $\textbf{u}_1$
	\EndIf
	\State $\Pprod u d \gets 0$
	  \ForAll{$n_d\in \{1,\dots, {N_d}\}$} 
		 \State $\Pprod u{d-1}_{n_d}  \gets $\textsc{Eval}$ ([ \Quad_\delta]_{\delta=1}^{d-1},
		 [\Phi_\delta]_{\delta=1}^{d-1}, \left[\textbf{u}_{n}\right]_{n: \pi_d(n) = n_d}  )$ \label{alg_line:rec}
		\ForAll{ $x_d\in \Quad_d:\phif d_{n_d}(x_d)\ne 0$}
		\State $ \Pprod u d({\textbf x},x_d) \gets \Pprod u d({\textbf x},x_d)
						+ \phif d_{n_d}(x_d)\Pprod u{d-1}_{n_d} ({\textbf x})$\label{alg_line:sum}
 		\EndFor
	\EndFor
	\State \textbf{return} $\Pprod u d$
	\EndProcedure
	\end{algorithmic}
\end{algorithm}
Again, we derive the number of floating point operations, assuming that the functions in the
generating set $\Phi_\delta$ have already been evaluated.
By counting the number of invocations of lines~\ref{alg_line:rec} and~\ref{alg_line:sum}, using
$$
\sum_{n=1}^{N_\delta}\#\{x\in\Quad_\delta:\phif{\delta}_{n}(x)\ne 0\}
=\sum_{x\in\Quad_\delta} \#\{\phif \delta_n\in\Phi_\delta:\phif \delta_{n}(x)\ne 0\}\le p_\delta \#\Quad_\delta
$$
we obtain that the cost $\Pprod \Cost d$ satisfies
$$
		\Pprod \Cost d
			\lesssim N_d  \Pprod \Cost {d-1} + p_d\left( \prod_{\delta=1}^d \#\Quad_\delta \right).
$$
Recursively plugging this bound into itself yields
$$
		\Pprod \Cost d
			\lesssim \sum_{i=1}^{d} \left( \prod_{\delta=i+1}^d N_\delta \right)
			p_i \left( \prod_{\delta=1}^i \#\Quad_\delta \right)  
			\lesssim\left( \sum_{i=1}^{d}  p_i \right)
			\left( \prod_{\delta=1}^d \max\{ N_\delta ,\# \Quad_\delta\}\right) .
$$
Depending on the ratio between $\#\Quad_\delta$ and $N_\delta$, we obtain the following costs 
\begin{align}
\label{evaluation-N-u}
&\forall \delta,\,\#\Quad_\delta\lesssim  p_\delta N_\delta   &\Rightarrow&&& \Pprod\Cost d
\lesssim  p^{d+1} N,\\
\label{evaluation-N2-u}
&\forall \delta,\,\#\Quad_\delta\lesssim   N_\delta    &\Rightarrow&&& \Pprod\Cost{d}\lesssim  p \, N,\\
\label{evaluation-Q-u}
&\forall \delta,\,\#\Quad_\delta\gtrsim   N_\delta &\Rightarrow&&& \Pprod\Cost{d}\lesssim  p \,    \#\Quad .
\end{align}

\subsection{Evaluation of the coefficient function \texorpdfstring{$\mathcal F$}{ F} in IGA}\label{sec:6:2}

As mentioned in Section~\ref{sec:2}, we have to evaluate the function $\mathcal F$ for every quadrature point.
In classical Isogeometric Analysis, we assume that the computational domain $\Omega$ is parametrized by a diffeomorphism
$$
	\textbf{G}:\widehat{\Omega}:=[0,1]^d \rightarrow \Omega = \textbf{G}(\widehat{\Omega}) \subset \mathbb R^s,
$$
which is an element of $\SPAN \Phi$ and has the form
$$
\textbf G = \sum_{n=1}^N \textbf{c}_n \phi_n.
$$
Computing $\mathcal F$ involves the evaluation of $\textbf G$ and/or its derivatives, cf.~\eqref{eq:cdr:1}.
Each component of $\textbf G$, or of a derivative of $\textbf G$, is a spline function and can be evaluated by the algorithm in Section~\ref{sec:6:3}.
Consequently, the cost of evaluating $\textbf G$ and the required derivatives is as in \eqref{evaluation-N-u}, \eqref{evaluation-N2-u} and~\eqref{evaluation-Q-u}, where the number of components is independent of $p$, but it depends on $d$, $s$ and the PDE, and it is hidden in $\lesssim$.

The costs of computing derived quantities, such as the pseudo-inverse of the Jacobian matrix or its determinant, are then proportional to $\#\Quad$ with a rate that depends on $d$ and $s$, but that is independent of $p$.
In any case, the cost of computing derived quantities is dominated, for large $p$, by the cost of computing $\textbf G$ and the required derivatives.

\section{Numerical experiments}\label{sec:7}

We have implemented the global strategy, the per-macro-element strategy
and the per-element strategy
and have tested their behavior for a few sample problems.
Our implementation is a C++ code which has been carefully optimized. It is
available online\footnote{\url{https://github.com/IgASF/IgASF}} as a stand-alone
assembling library.

\begin{figure}[th]
	\begin{minipage}{.45\textwidth}\centering
	\includegraphics[height=.75\textwidth]{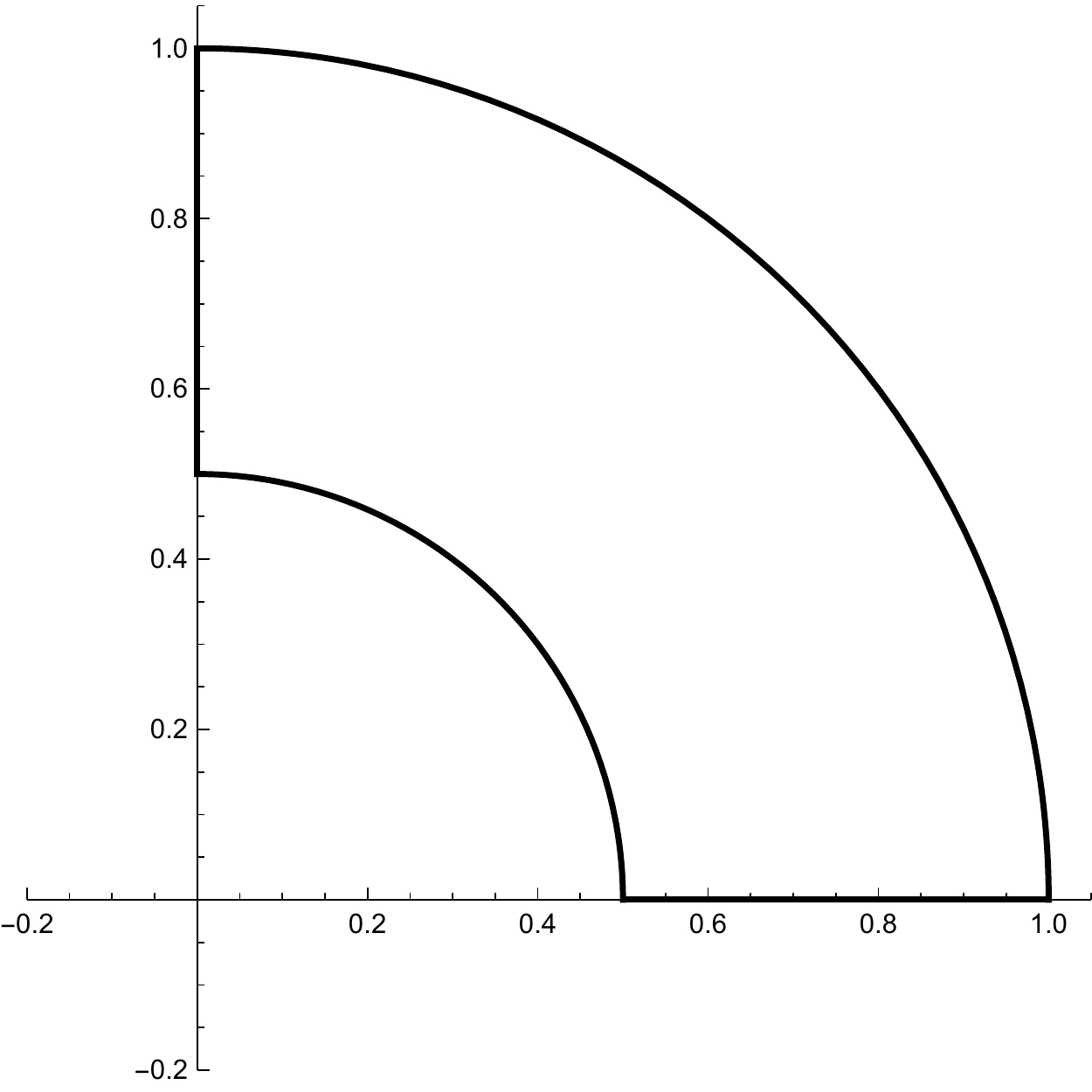}\\
	\scriptsize (a) 2D domain: quarter annulus
	\end{minipage}\hfill
	\begin{minipage}{.45\textwidth}\centering
	\includegraphics[height=.75\textwidth]{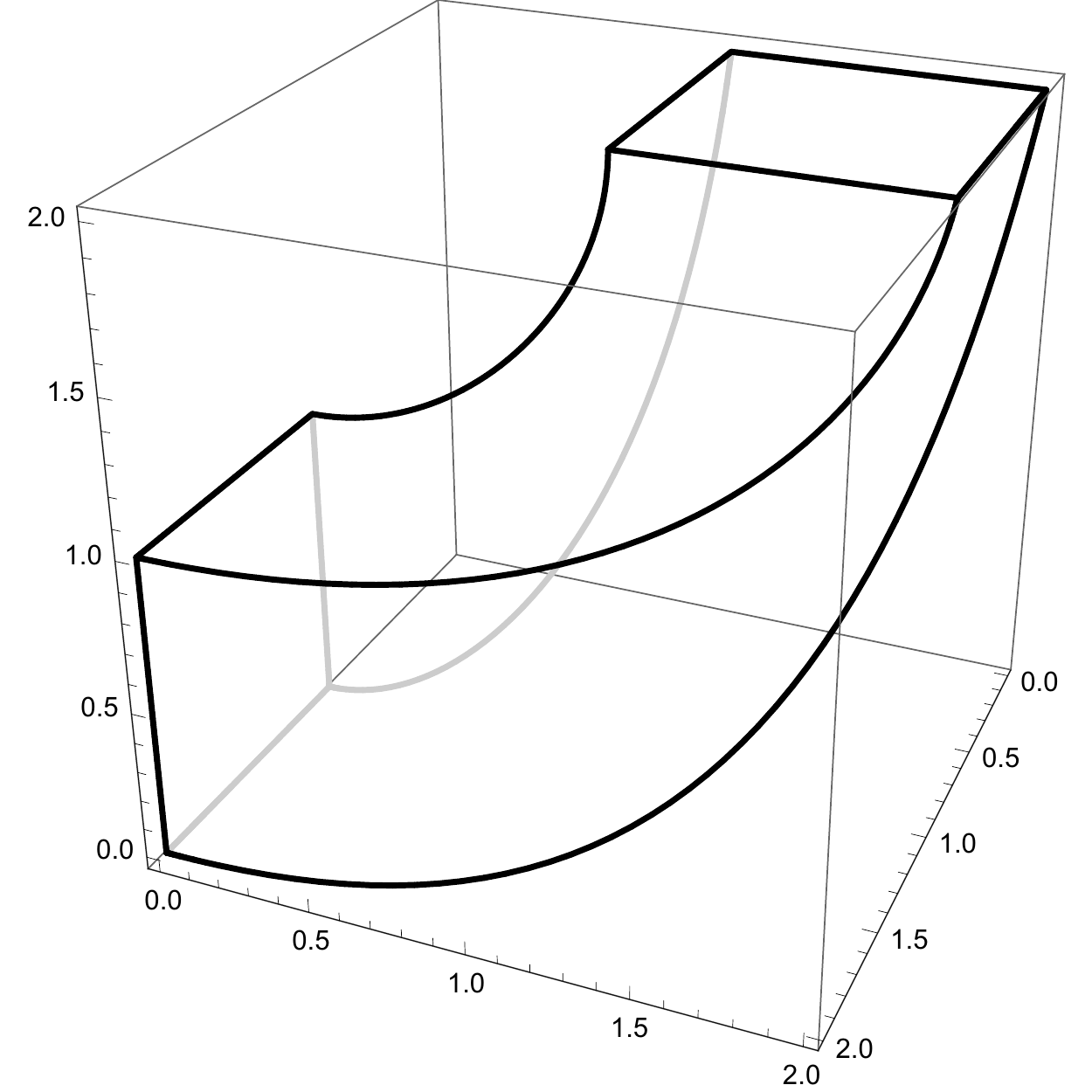}\\
	\scriptsize (b) 3D domain: bent and twisted box
	\end{minipage}
	\caption{\label{fig:domains} Computational domains}
\end{figure}

In the numerical experiments, we assemble a standard stiffness matrix for the 2D
and 3D domains depicted in Fig.~\ref{fig:domains}.
The 2D domain is decomposed into $200\times 200$ elements, the 3D domain into
$22\times 22 \times 22$ elements.
On each domain, we assemble the stiffness matrix for splines of several orders $p$
using Gauss quadrature of the same order and we compare the time used by different
algorithms. This was done on a single socket machine with an Intel(R) Core(TM)
i3-8100 CPU running at 3.60GHz.

In Fig.~\ref{fig:2d-times} and \ref{fig:3d-times}, we report the assembling times for
standard per-element assembling (\texttt{standard}), per-element sum factorization
(\texttt{element}), per-macro-element sum factorization (\texttt{macroS}) and global
sum factorization (\texttt{global}).  The macro-elements have size $p\times p$
in $2D$ and $p\times p \times p$ in $3D$.
Standard per-element assembling uses the same Gauss quadrature and was performed
using the freely available IGA library G+Smo \cite{gismoweb}.

In Fig.~\ref{fig:2d-times-macro} and \ref{fig:3d-times-macro}, we focus on the effect
of the macro-element size.
The considered approaches are: standard macro-elements (\texttt{macroS}) corresponding to a size of $p\times p$ in 2D and $p\times p\times p$ in 3D,
narrow macro-elements (\texttt{macroN}) having size $p\times 1$ in 2D and $p \times p \times 1$ in 3D and
rotated narrow macro-elements (\texttt{macroR}) having size $1\times p$ in 2D and $1 \times p \times p$ in 3D.
According to Example~\ref{example:narrow}, the narrow dimension should come last in sum factorization.
As our code does not reorder the dimensions we expect the same behavior for \texttt{macroS} and \texttt{macroN}, but a cost higher by a factor $p$ for \texttt{macroR}.
This means that \texttt{macroR} behaves as \texttt{element} in $2D$ and as $p^6 N$ in 3D.

\begin{figure}[th]\centering
\begin{tikzpicture}[scale=.75, transform shape]
	\begin{semilogyaxis}[
	minor x tick num=0,
	minor y tick num=9,
	xmin=1, xmax=22,
	ymin=0.05, ymax=1e4,
	y=.5cm,
	x=.5cm,
	grid=major,
	legend style={
	at={(0.75,0.05)},
	anchor=south
	}
	]

	\addplot+ [blue,mark=*] coordinates {
	(2,   2.4e+00)
	(3,   4.9e+00)
	(4,   8.5e+00)
	(5,   1.4e+01)
	(6,   2.1e+01)
	(7,   3.2e+01)
	(8,   4.7e+01)
	(9,   6.7e+01)
	(10,  9.4e+01)
	(11,  1.4e+02)
	(12,  1.9e+02)
	(13,  2.8e+02)
	(14,  3.8e+02)
	(15,  5.4e+02)
	(16,  7.8e+02)
	(17,  1.0e+03)
	(18,  1.3e+03)
	(19,  1.8e+03)
	(20,  2.4e+03)
	};
	\addplot plot [domain=2:21,dashed,blue,mark=none,forget plot] { 1.64e-08*x^4.98*(200+x-1)^2 };

	\addplot [red,mark=diamond] coordinates {
	(2,   0.7)
	(3,   0.94)
	(4,   1.12)
	(5,   1.92)
	(6,   2.66)
	(7,   5.09)
	(8,   7.42)
	(9,   12.38)
	(10,  15.82)
	(11,  30.28)
	(12,  36.81)
	(13,  54.26)
	(14,  63)
	(15,  94.77)
	(16,  110.08)
	(17,  159)
	(18,  185.02)
	(19,  258.67)
	(20,  287.78)
	};
	\addplot plot [domain=2:21,red,dashed,mark=none,forget plot]{ 4.91e-08*x^3.92*(200+x-1)^2 };
	
	\addplot [green,mark=triangle] coordinates {
	(2,   0.28)
	(3,   0.26)
	(4,   0.39)
	(5,   0.64)
	(6,   0.97)
	(7,   1.63)
	(8,   2.3)
	(9,   3.37)
	(10,  4.63)
	(11,  10.68)
	(12,  13.5)
	(13,  17.54)
	(14,  21.34)
	(15,  27.78)
	(16,  33.43)
	(17,  42.05)
	(18,  51)
	(19,  63.46)
	(20,  72.5)
	};
	\addplot plot [domain=2:21,green,dashed,mark=none,forget plot]{ 7.94e-08*x^3.3*(200+x-1)^2 };
	
	\addplot [purple,mark=square] coordinates {
	(2,   0.07 )
	(3,   0.12 )
	(4,   0.21 )
	(5,   0.43 )
	(6,   0.67 )
	(7,   1.17 )
	(8,   1.65 )
	(9,   2.42 )
	(10,  3.42 )
	(11,  8.67 )
	(12,  10.73 )
	(13,  14.2 )
	(14,  17.31 )
	(15,  22.49 )
	(16,  27.78 )
	(17,  35.28 )
	(18,  42.08 )
	(19,  52.16 )
	(20,  63.66 )
	};
	\addplot plot [domain=2:21,purple,dashed,mark=none,forget plot]{4.02e-08*x^3.47*(200+x-1)^2 };

	\legend{
	\texttt{standard}\\
	\texttt{element}\\
	\texttt{macroS}\\
	\texttt{global}\\
	}
	\end{semilogyaxis}\end{tikzpicture}
	\caption{Time in seconds for assembling the stiffness matrix on the 2D domain depending on the polynomial order $p$.}\label{fig:2d-times}
\end{figure}

\begin{figure}[th]\centering
\begin{tikzpicture}[scale=.75, transform shape]
	\begin{semilogyaxis}[
	minor x tick num=0,
	minor y tick num=9,
	xmin=1, xmax=22,
	ymin=0.05, ymax=1e5,
	y=.5cm,
	x=.5cm,
	grid=major,
	legend style={
	at={(0.75,0.05)},
	anchor=south
	}
	]

	\addplot+ [blue,mark=*] coordinates {
	(2,   2.7e-01)
	(3,   1.1e+00)
	(4,   6.1e+00)
	(5,   3.0e+01)
	(6,   1.2e+02)
	(7,   4.6e+02)
	(8,   1.5e+03)
	(9,   4.0e+03)
	(10,  1.0e+04)
	(11,  2.4e+04)
	(12,  5.3e+04)
	};
	\addplot plot [domain=2:13,dashed,blue,mark=none,forget plot] {2.99e-09*x^8.05*(22+x-1)^3 };

	\addplot [red,mark=diamond] coordinates {
	(2,    0.36)
	(3,    0.85)
	(4,    2.24)
	(5,    7.53)
	(6,    18.15)
	(7,    48.48)
	(8,    107.74)
	(9,    228.05)
	(10    417)
	(11,   916.94)
	(12,   1438.85)
	(13,   2416.95)
	(14,   3683.59)
	(15,   5944.21)
	};
	\addplot plot [domain=2:16,red,dashed,mark=none,forget plot]{ 1.03e-7*x^5.18*(22+x-1)^3 };

	\addplot [green,mark=triangle] coordinates {
	(2,    0.15)
	(3,    0.38)
	(4,    0.94)
	(5,    2.42)
	(6,    4.85)
	(7,    10.54)
	(8,    17.57)
	(9,    31.13)
	(10    53.93)
	(11,   104.92)
	(12,   151.95)
	(13,   222.89)
	(14,   309.51)
	(15,   443.83)
	};
	\addplot plot [domain=2:16,green,dashed,mark=none,forget plot]{ 3.64e-07*x^3.76*(22+x-1)^3 };
	
	\addplot [purple,mark=square] coordinates {
	(2,    0.11)
	(3,    0.25)
	(4,    0.64)
	(5,    1.64)
	(6,    3.35)
	(7,    7.07)
	(8,    12.55)
	(9,    21.51)
	(10    36.03)
	(11,   85.41)
	(12,   124.24)
	(13,   177.67)
	(14,   244.9)
	(15,   344.01)
	};
	\addplot plot [domain=2:16,purple,dashed,mark=none,forget plot]{3.32e-07*x^3.70*(22+x-1)^3 };

	\legend{\texttt{standard}\\
	\texttt{element}\\
	\texttt{macroS}\\
	\texttt{global}\\
	}
	\end{semilogyaxis}\end{tikzpicture}
	\caption{Time in seconds for assembling the stiffness matrix on the 3D domain  depending on the polynomial order $p$.}\label{fig:3d-times}
\end{figure}

\begin{figure}[th]\centering
\begin{tikzpicture}[scale=.75, transform shape]
	\begin{semilogyaxis}[
	minor x tick num=0,
	minor y tick num=9,
	xmin=1, xmax=22,
	ymin=0.05, ymax=1e3,
	x=.5cm,
	y=.5cm,
	grid=major,
	legend style={
	at={(0.75,0.05)},
	anchor=south
	}
	]

	\addplot [brown,mark=*] coordinates {
	(2,   0.42)
	(3,   0.44)
	(4,   0.54)
	(5,   0.9)
	(6,   1.27)
	(7,   2.03)
	(8,   2.68)
	(9,   3.98)
	(10,  5.22)
	(11,  11.52)
	(12,  14.16)
	(13,  18.81)
	(14,  22.37)
	(15,  29.94)
	(16,  34.87)
	(17,  44.66)
	(18,  53.26)
	(19,  67.44)
	(20,  76.1)
	};
	\addplot plot [domain=2:21,brown,dashed,mark=none,forget plot]{9.25e-08*x^3.26*(200+x-1)^2 };

	\addplot+ [black,mark=square] coordinates {
	(2,   0.43)
	(3,   0.47)
	(4,   0.61)
	(5,   1.31)
	(6,   1.91)
	(7,   3.65)
	(8,   5.42)
	(9,   9.62)
	(10,  12.36)
	(11,  26.39)
	(12,  31.75)
	(13,  47.64)
	(14,  55.12)
	(15,  84.13)
	(16,  97.83)
	(17,  141.52)
	(18,  166.1)
	(19,  233.71)
	(20,  257.81)
	};
	\addplot plot [domain=2:21,dashed,black,mark=none,forget plot] { 3.63e-08*x^3.99*(200+x-1)^2 };

	\addplot [green,mark=triangle] coordinates {
	(2,   0.28)
	(3,   0.26)
	(4,   0.39)
	(5,   0.64)
	(6,   0.97)
	(7,   1.63)
	(8,   2.3)
	(9,   3.37)
	(10,  4.63)
	(11,  10.68)
	(12,  13.5)
	(13,  17.54)
	(14,  21.34)
	(15,  27.78)
	(16,  33.43)
	(17,  42.05)
	(18,  51)
	(19,  63.46)
	(20,  72.5)
	};
	\addplot plot [domain=2:21,green,dashed,mark=none,forget plot]{ 7.94e-08*x^3.3*(200+x-1)^2 };

	\legend{
	\texttt{macroN}\\
	\texttt{macroR}\\
	\texttt{macroS}\\
	}
	\end{semilogyaxis}\end{tikzpicture}
	\caption{Time in seconds for assembling the stiffness matrix on the 2D domain with macro-elements  depending on the polynomial order $p$.}\label{fig:2d-times-macro}
\end{figure}

\begin{figure}[th]\centering
\begin{tikzpicture}[scale=.75, transform shape]
	\begin{semilogyaxis}[
	minor x tick num=0,
	minor y tick num=9,
	xmin=1, xmax=22,
	ymin=0.05, ymax=1e4,
	x=.5cm,
	y=.5cm,
	grid=major,
	legend style={
	at={(0.75,0.05)},
	anchor=south
	}
	]

	\addplot [brown,mark=*] coordinates {
	(2,    0.17)
	(3,    0.43)
	(4,    0.99)
	(5,    2.58)
	(6,    5.09)
	(7,    11.42)
	(8,    18.75)
	(9,    34.12)
	(10,   58.36)
	(11,   110.12)
	(12,   160.61)
	(13,   236.43)
	(14,   329.1)
	(15,   470.63)
	};
	\addplot plot [domain=2:16,brown,dashed,mark=none,forget plot]{3.89e-07*x^3.75*(22+x-1)^3 };

	\addplot+ [black,mark=square] coordinates {
	(2,    0.19)
	(3,    0.57)
	(4,    1.33)
	(5,    4.36)
	(6,    9.04)
	(7,    23.8)
	(8,    44.03)
	(9,    92.49)
	(10,    152.16)
	(11,   317.57)
	(12,   456.64)
	(13,   748.58)
	(14,   1028.41)
	(15,   1650.32)
	};
	\addplot plot [domain=2:16,dashed,black,mark=none,forget plot] { 1.74e-07*x^4.51*(22+x-1)^3 };

	\addplot [green,mark=triangle] coordinates {
	(2,    0.15)
	(3,    0.38)
	(4,    0.94)
	(5,    2.42)
	(6,    4.85)
	(7,    10.54)
	(8,    17.57)
	(9,    31.13)
	(10,    53.93)
	(11,   104.92)
	(12,   151.95)
	(13,   222.89)
	(14,   309.51)
	(15,   443.83)
	};
	\addplot plot [domain=2:16,green,dashed,mark=none,forget plot]{ 3.64e-07*x^3.76*(22+x-1)^3 };
 
   \legend{
	\texttt{macroN}\\
	\texttt{macroR}\\
	\texttt{macroS}\\
	}
	\end{semilogyaxis}\end{tikzpicture}

	\caption{Time in seconds for assembling the stiffness matrix on the 3D domain using macro-elements  depending on the polynomial order $p$.}\label{fig:3d-times-macro}
\end{figure}

We observe that in any case, the sum factorization approaches are faster than
the standard approach. Conforming with the theory, we see that \texttt{global} is significantly
faster than \texttt{element}. Moreover, we obtain that the macro-element approaches
\texttt{macro} and \texttt{macroN} are indeed almost
as fast as the global approach \texttt{global}.

The results are used for fitting the parameters $c$ and $e$ in the formula
$$
t = c p^e N  (p)  ,
$$
where $t$ is the measured time, $N(p) = \prod_{\delta=1}^d(p+K_\delta-1)$ is the number of degrees of freedom and $K_\delta$ is the number of elements in the corresponding direction, i.e., $K_1=K_2=200$ for $d=2$ and $K_1=K_2=K_3=22$ for $d=3$.
The fitted curves are dashed in Fig.~\ref{fig:2d-times}--\ref{fig:3d-times-macro}.
The fitting procedure yields the following values for the exponent $e$:\\
\begin{center}
\begin{tabular}{lrrrr}
	\toprule
	&\multicolumn{2}{l}{2D domain}&\multicolumn{2}{l}{3D domain}\\
	\midrule
	&theory&experiments&theory&experiments\\
	\midrule
	\texttt{standard}  & 6 & 4.98 & 9 & 8.05\\
	\texttt{element}& 5 & 3.92 & 7 & 5.18\\
	\texttt{macroN} & 4 & 3.26 & 5 & 3.75\\
	\texttt{macroR} & 5 & 3.99 & 6 & 4.51\\
	\texttt{macroS} & 4 & 3.30 & 5 & 3.76\\
	\texttt{global} & 4 & 3.47 & 5 & 3.70\\
	\bottomrule
\end{tabular}\\[1.5em]
\end{center}

We observe that the exponents $e$  obtained in our experiments  are significantly lower than those predicted by the theory.
We believe that the huge difference between the speed in performing computation and the speed in accessing memory in modern processors, is masking one order in $p$.
For the 3D example, there is also another reason: the fitting is distorted by the small number of elements,
i.e., the approximation  $p^d N\approx \#\Quad$ does not apply:
$L_\delta=22$ so that $p N_\delta= 22 p+p^2 -p $ is twice bigger than $\#\Quad_\delta= 22 p$ for $p=20$.

In Fig.~\ref{fig:2d-times-parallel} and \ref{fig:3d-times-parallel}, we report the assembling times using a proof-of-concept multi-threaded approach based on macro-elements of size $p\times p$ in 2D and $p\times p\times p$ in 3D (\texttt{macroS}).
To have a sufficient number of macro-elements the 2D domain is split in $2000\times2000$ elements and the 3D domain is split in $64\times64\times64$ elements.
The lines correspond to the polynomial orders $p=4,6$ and $8$, and the abscissa is the number of parallel threads.
The tests have been executed on a dual socket machine with Intel(R) Xeon(R) CPU E5-2695 v4 running at 2.10GHz.
The figures show that the macro-element approach is viable to parallelization.
Table~\ref{tab:paralle-speedup} shows the corresponding speed-up factors, i.e. the ratio between the time for the single-threaded execution and the time for the multi-threaded execution.

\begin{table}
\begin{center}
\small
\begin{tabular}{ll|rrrrrrrrrrrr}
	\toprule
	 &  &\multicolumn{11}{c}{threads}\\
	$\dim$ &$p$&1&2&4&6&8&12&16&20&24&28&32&36\\
	\midrule
	2 &4 & 1& 1.9 & 3.5 & 4.7 & 5.9 & 7.6 & 9.3 & 10.6 & 11.3 & 12.4 & 13.5 & 12.8 \\
	2 &6 & 1& 2.0 & 3.6 & 4.9 & 6.2 & 8.5 & 10.3 & 11.9 & 14.0 & 15.2 & 15.8 & 16.1 \\
	2 &8 & 1& 2.0 & 3.8 & 5.1 & 6.6 & 8.9 & 11.0 & 13.2 & 15.2 & 16.9 & 18.4 & 19.8 \\
	\midrule
	3 &4 & 1& 1.9 & 3.7 & 5.2 & 6.6 & 8.9 & 11.0 & 12.9 & 14.2 & 16.7 & 15.0 & 17.4 \\
	3 &6 & 1& 2.0 & 3.7 & 5.2 & 6.6 & 9.0 & 10.9 & 12.8 & 14.3 & 15.2 & 15.6 & 17.6 \\
	3 &8 & 1& 2.0 & 3.7 & 5.0 & 6.6 & 8.3 & 10.9 & 11.4 & 13.6 & 13.6 & 17.4 & 17.5 \\
	\bottomrule
\end{tabular}
\end{center}
\caption{Speed-up factors for assembling the stiffness matrix using a parallel macro-element implementation.}\label{tab:paralle-speedup}
\end{table}

\begin{figure}[th]\centering
\begin{tikzpicture}[scale=.75, transform shape]
	\begin{loglogaxis}[
	xtick={1, 2, 4, 8, 16, 32},
	xticklabels={$1$, $2$, $4$, $8$, $16$, $32$},
	minor x tick num=0,
	minor y tick num=9,
	xmin=.5, xmax=64,
	ymin=1, ymax=1000,
	x=2cm,
	y=.9cm,
	grid=major,
	legend style={
	at={(0.15,0.05)},
	anchor=south
	}
	]
	\addplot coordinates {
		( 1, 41.5077)
		( 2, 22.0282)
		( 4, 11.9917)
		( 6, 8.88027)
		( 8, 7.12833)
		(12, 5.49706)
		(16, 4.45679)
		(20, 3.92254)
		(24, 3.6888)
		(28, 3.35139)
		(32, 3.07826)
		(36, 3.24829)
		};
	\addplot coordinates {
		( 1, 114.123)
		( 2, 58.4065)
		( 4, 31.8738)
		( 6, 23.0607)
		( 8, 18.4459)
		(12, 13.3925)
		(16, 11.1176)
		(20, 9.57081)
		(24, 8.14619)
		(28, 7.47907)
		(32, 7.21112)
		(36, 7.0827)
		};
	\addplot coordinates {
		( 1, 266.694)
		( 2, 134.424)
		( 4, 71.0182)
		( 6, 52.0786)
		( 8, 40.3489)
		(12, 29.8695)
		(16, 24.1967)
		(20, 20.135)
		(24, 17.5374)
		(28, 15.7785)
		(32, 14.5309)
		(36, 13.4914)
		};
	\legend{
		$p=4$\\
		$p=6$\\
		$p=8$\\
		}
	\end{loglogaxis}\end{tikzpicture}

	\caption{Times in seconds for assembling the stiffness matrix on the 2D domain using a parallel macro-element implementation.}\label{fig:2d-times-parallel}
\end{figure}

\begin{figure}[th]\centering
\begin{tikzpicture}[scale=.75, transform shape]
	\begin{loglogaxis}[
	xtick={1, 2, 4, 8, 16, 32},
	xticklabels={$1$, $2$, $4$, $8$, $16$, $32$},
	minor x tick num=0,
	minor y tick num=9,
	xmin=.5, xmax=64,
	ymin=1, ymax=1000,
	x=2cm,
	y=.9cm,
	grid=major,
	legend style={
	at={(0.15,0.05)},
	anchor=south
	}
	]
	\addplot coordinates {
		( 1, 26.4343)
		( 2, 13.2864)
		( 4, 7.12293)
		( 6, 5.0635)
		( 8, 3.99639)
		(12, 2.982)
		(16, 2.397)
		(20, 2.04168)
		(24, 1.85888)
		(28, 1.5788)
		(32, 1.75608)
		(36, 1.51988)
		};
	\addplot coordinates {
		( 1, 140.303)
		( 2, 71.4071)
		( 4, 37.7033)
		( 6, 26.7977)
		( 8, 21.1383)
		(12, 15.6453)
		(16, 12.8378)
		(20, 10.9742)
		(24, 9.83623)
		(28, 9.24794)
		(32, 8.9753)
		(36, 7.98465)
		};
	\addplot coordinates {
		( 1, 487.501)
		( 2, 246.831)
		( 4, 132.412)
		( 6, 96.9361)
		( 8, 73.4784)
		(12, 58.6642)
		(16, 44.6579)
		(20, 42.9512)
		(24, 35.8145)
		(28, 35.7794)
		(32, 28.0825)
		(36, 27.7732)
		};
   \legend{
	$p=4$\\
	$p=6$\\
	$p=8$\\
	}
	\end{loglogaxis}\end{tikzpicture}

	\caption{Times in seconds for assembling the stiffness matrix on the 3D domain using a parallel macro-element implementation.}\label{fig:3d-times-parallel}
\end{figure}

\section{Conclusions}\label{sec:8}

We have developed a unified complexity analysis for sum factorization approaches.
The theory shows
for several discretizations of interest in Isogeometric Analysis
that the computational costs can be reduced significantly by using sum factorization.
One of the advantages is that sum factorization can be applied with any tensor
product quadrature and that it yields, up to machine precision,
the same matrix as the standard approach.
In particular, significant savings are already achieved using standard Gauss quadrature.

We show that sum factorization does not yield its optimal complexity if it is applied on each element separately. However, one does not need to apply it globally to obtain its optimal complexity: it is sufficient to apply it to blocks of
at least $p$ elements in all directions, but one.

Moreover, we have shown that parallel implementations of localized sum factorization are a
viable strategy for fast assembling of the system matrix in IgA applications.

Additionally, we have examined the computational costs for matrix-free approaches.
For weighted quadrature, cf.~\cite{Sangalli:2017}, the
estimates~\eqref{matrix-less-N2} and \eqref{evaluation-N2-u} show
that the costs are $\eqsim p N$, which is
is a factor of $p^d$ less than the assembling of the matrix, cf.~\eqref{sum factorization-N}.
We see that Gauss quadrature does not allow an analogous speedup.
The estimates~\eqref{matrix-less-N} and \eqref{evaluation-N-u} show that, in this case,
the potential saving of a matrix-free approach corresponds only to a factor of~$p$.

\section*{Acknowledgments}

The first author has received funding from the European Research Council under the European Union's Seventh Framework Programme (FP7/2007-2013) / ERC grant agreement 339643.
The second author was partially funded by the Austrian Science Fund (FWF) under grants NFN S117-03
and P31048.
The authors want to thank the anonymous reviewers for their helpful comments and suggestions.

\section*{References}
\bibliographystyle{amsplain}
\bibliography{references}

\end{document}